\documentclass[a4paper,10pt]{article}
\usepackage{amsfonts}
\usepackage{bbm}
\usepackage{mathrsfs}
\usepackage{amsmath,amsthm,amssymb,amscd}
\usepackage[center]{titlesec}
\setlength{\textwidth}{5.0in}
\setlength{\textheight}{8.5in}
\setlength{\topmargin}{-0.1truein}
\newtheorem{theo}{Theorem}[section]
\newtheorem{prop}[theo]{Proposition}
\newtheorem{lem}[theo]{Lemma}

\newtheorem{rem}[theo]{Remark}

\newtheorem{defi}[theo]{Definition}

\newtheorem{ques}[theo]{Question}
\newcommand{\bp}{\begin{proof}}
\newcommand{\ep}{\end{proof}}

 \begin{document}
 \setlength{\baselineskip}{13pt} \pagestyle{myheadings}

 \title{
 {Symplectic Parabolicity and $L^2$ Symplectic Harmonic Forms}
 \thanks{
 Supported by NSFC (China) Grants 11471145, 11401514, 11701226 (Tan), 11371309, 11771377 (Wang);
 Natural Science Foundation of Jiangsu Province BK20170519 (Tan);
 University Scinence Research Project of Jiangsu Province 15KJB110024 (Zhou);
 Foundation of Yangzhou University 2015CXJ003 (Zhou).}
 }
 \author{{\large Qiang Tan, Hongyu Wang\thanks {E-mail:
 hywang@yzu.edu.cn}, Jiuru Zhou}\\
 ~
  {\small To the memory of Professor Weiyue Ding }\\
 }
 \date{}
 \maketitle

 \noindent {\bf Abstract:}
 In this paper, we study the symplectic cohomologies and symplectic harmonic forms which introduced by Tseng and Yau.
 Based on this, we get
 if $(M^{2n},\omega)$ is a closed symplectic parabolic manifold which satisfies the hard Lefschetz property, then its Euler number satisfies the inequality $(-1)^n\chi(M^{2n})\geq 0$.
 \\

 \noindent {{\bf AMS Classification (2000):} 53D05.}\\

 \noindent {{\bf Keywords:}
             symplectic cohomology, symplectic parabolicity, $L^2$ harmonic form.}

 \section{Main results}
   This paper is related to a special case of the Chern conjecture claiming that the topological Euler characteristic of a real $2n$-dimensional
   closed manifold $M$ of negative curvature satisfies ${\rm sign}\chi(M)=(-1)^n$.
   This conjecture is true in dimensions $2$ and $4$ (\cite{Chern}).
   In dimension two, the answer follows immediately from the Gauss-Bonnet formula, i.e., a closed manifold of negative sectional curvature has negative Euler number.
   In dimension four, it is proved by J. Milnor (see \cite{Chern}) that negative sectional curvature implies that Gauss-Bonnet integrand is pointwise positive.

   A differential form $\alpha$ on a Riemannian manifold $(M,g)$ is called $d(bounded)$ if $\alpha$ is the exterior differential of a
   bounded form $\gamma$, i.e., $\alpha=d\gamma$, where $\|\gamma\|_{L^{\infty}}=\sup_{x\in M}\| \gamma(x)\|_g<\infty$.
    A form $\alpha$ on a Riemannian manifold $(M,g)$ is called $\tilde{d}(bounded)$ if the lift $\tilde{\alpha}$ of $\alpha$ to the
    universal covering $\tilde{M}\rightarrow M$ is $d(bounded)$ on $\tilde{M}$ with respect to the lift metric $\tilde{g}$.
   Gromov gave the definition of K\"{a}hler hyperbolic in \cite{Gromov}.
   A closed complex manifold is called K\"{a}hler hyperbolic if it admits a K\"{a}hler metric whose K\"{a}hler form $\omega$ is $\tilde{d}(bounded)$.
   Similarly, we can define symplectic hyperbolic manifold.
    Let $(M,\omega)$ be a closed symplectic manifold.
    Choose a $\omega$-compatible almost complex structure $J$ on $M$ (\cite{MS}).
     Define an almost K\"{a}hler metric, $g(\cdot,\cdot)=\omega(\cdot,J\cdot)$, on $M$. Then the triple $(g,J,\omega)$ is called an almost K\"ahler structure on $M$ and the quadraple $(M,g,J,\omega)$ is called a closed almost K\"ahler manifold.

   \begin{defi}
   A closed almost K\"ahler manifold $(M,g,J,\omega)$ is called symplectic hyperbolic if the lift $\tilde{\omega}$ of $\omega$
    to the universal covering
   $(\tilde{M},\tilde{g},\tilde{J},\tilde{\omega})\rightarrow (M,g,J,\omega)$
   is $d(bounded)$ on $(\tilde{M},\tilde{g},\tilde{J},\tilde{\omega})$.
   \end{defi}

   M. Gromov \cite{Gromov} introduced the notion of K\"{a}hler hyperbolicity and proved the above conjecture in the K\"{a}hler case.
   After Gromov's work, J. Jost and K. Zuo \cite{Jost} obtained that:
   If $M$ is a $2n$-dimensional closed Riemannian manifold of non-positive sectional curvature and homotopy equivalent to a closed
   K\"{a}hler manifold, then $(-1)^n\chi(M)\geq0$.
   It is well known that the geodesic flow on the unit tangent bundle of a negatively curved
   closed Riemannian manifold is an Anosov geodesic flow.
   Cheng \cite{Cheng} has proven that: Let $M$ be a $2n$-dimensional closed Riemannian manifold with Anosov geodesic flow.
   If $M$ is homotopy equivalent to a closed K\"{a}hler manifold, then $(-1)^n\chi(M)>0$.

    Gromov has proven that:
   If $(M,g)$ is complete simply connected and has strictly negative sectional curvature,
   then every smooth bounded closed from of degree $k\geq 2$ is $d(bounded)$ (\cite[0.1 B]{Gromov}).
   We want to single out a condition which is weaker than $d$-boundedness
   and can be applied to K\"{a}hler manifolds of non-positive curvature.
   A differential form $\alpha$ on a closed Riemannian manifold is called $d(sublinear)$ (cf. \cite{Hitchin,Jost})
    if there exist a differential form $\beta$ and a number $c>0$
   such that $\alpha=d\beta$ and $\|\beta(x)\|_g\leq c(\rho(x_0,x)+1)$, where $\rho(x_0,x)$ stands
   for the Riemannian distance between $x$ and a base point $x_0$.
   A form $\alpha$ on a Riemannian manifold $(M,g)$ is called $\tilde{d}(sublinear)$ if the lift $\tilde{\alpha}$ of $\alpha$ to the
    universal covering $\tilde{M}\rightarrow M$ is $d(sublinear)$ on $\tilde{M}$ with respect to the lift metric $\tilde{g}$.
   We pay special attention to the symplectic manifold.

   \begin{defi}
    A closed almost K\"ahler manifold $(M,g,J,\omega)$ is called symplectic parabolic
    if the lift $\tilde{\omega}$ of $\omega$
    to the universal covering
   $(\tilde{M},\tilde{g},\tilde{J},\tilde{\omega})\rightarrow (M,g,J,\omega)$
   is $d(sublinear)$  on $(\tilde{M},\tilde{g},\tilde{J},\tilde{\omega})$.
   \end{defi}

   \vskip 6pt

   If $L^2\Omega^k$ denotes the Hilbert space of $L^2$ $k$-forms,
   then the $L^2$-cohomology group $L^2H^k_{dR}$ is defined as the quotient of the space of closed $L^2$ $k$-forms by the
   closure of the space $dL^2\Omega^{k-1}\cap L^2\Omega^k$.
   It is a theorem that on a complete manifold any harmonic $L^2$ $k$-form is closed and coclosed
    and so represents a class in $L^2H^k_{dR}$.
    Hitchin \cite{Hitchin} has proven that the $L^2$ harmonic forms on a complete noncompact K\"{a}hler parabolic manifold lie in the
   middle dimension, that is,
   if the K\"{a}hler form $\omega$ on a complete noncompact K\"{a}hler manifold
   is $d(sublinear)$, then the only $L^2$ harmonic forms lie in the middle dimension.
   In this paper, we want to prove some similar results for another two $L^2$ harmonic forms ($L^2$ symplectic harmonic).

   \vskip 12pt

  Let $(M,g,J,\omega)$ be a closed almost K\"ahler $2n$-manifold, that is,
  $M$ is a closed differential manifold with an almost K\"ahler structure on $M$.
  Symplectic Hodge theory was introduced by Ehresmann and Libermann \cite{EL} and was rediscovered by Brylinski \cite{BrA}.
  They defined the symplectic star operator $*_s: \Omega^k(M)\rightarrow \Omega^{2n-k}(M)$ analogously to the Hodge
  star operator but with respect to the symplectic form $\omega$.
  As in Riemannian Hodge theory, define $d^{\Lambda}=(-1)^{k+1}*_s d*_s$ on $\Omega^k(M)$ (cf. \cite{Koszul}).
  A form $\alpha$ is called symplectic harmonic if it satisfies $d\alpha=d^{\Lambda}\alpha=0$.
  Brylinski conjectured that on a closed symplectic manifold, every de Rham cohomology class contains a symplectic harmonic representative.
  Some evidence for his conjecture was presented in his paper \cite{BrA} and he proved the conjecture for closed K\"{a}hler manifolds. Several years later,
  his conjecture for closed symplectic manifolds was disproved by Oliver Mathieu \cite{Mathieu}.
  Brylinski's conjecture is equivalent to the question of the existence of a Hodge decomposition in the symplectic sense.
  The uniqueness of the decompostion in this case is evidently not true.
  Mathieu gave two ways to give counter-examples to Brylinski's conjecture.
  In fact, Mathieu proved that every de Rham cohomology $H^*_{dR}(M)$ class contains a symplectic harmonic form if and only if the
  symplectic manifold satisfies the hard Lefschetz property, that is, the map
   \[
    H^k_{dR}(M)\rightarrow H^{2n-k}_{dR}(M),\quad A\mapsto [\omega]^{n-k}\wedge A,
   \]
   is an isomorphism for all $k\leqslant n$.

   Mathieu's theorem is a generalization of the hard Lefschetz theorem for closed K\"{a}hler manifolds.
  His proof involves the representation theory of quivers and Lie superalgebras.
  Dong Yan \cite{YanH} provided a simpler, more direct proof of this fact.
  Yan's proof follows the idea of the standard proof of the hard Lefschetz theorem.

  Both the existence and uniqueness of symplectic harmonic forms may be not expected to hold in de Rham cohomology for closed almost K\"ahler manifolds.
  So Tseng and Yau thought that the de Rham cohomology may be not the appropriate cohomology to consider symplectic Hodge theory.
   They \cite{TY} introduced some new cohomology groups $H^k_{d+d^\Lambda}(M)$ and $H^k_{dd^\Lambda}(M)$
   for a symplectic manifold $(M,\omega)$.
   These two cohomologies are similarly paired and share many analogous properties with the pair Bott-Chern cohomology
   and Aeppli cohomology defined on complex manifolds.
   Indeed, both can be shown to be finite dimensional on closed complex manifolds
   by constructing self-adjoint fourth-order differential operators (cf. \cite{Kodaira}).
   Similarly to the construction in \cite{Kodaira}, Tseng and Yau found out the associated Laplacian operators $\Delta_{d+d^\Lambda}$ and $\Delta_{dd^\Lambda}$ with respect to an almost K\"ahler structure $(g,J,\omega)$ of these new cohomologies.
   Unfortunately, $\Delta_{d+d^\Lambda}$ and $\Delta_{dd^\Lambda}$ are not elliptic operators.
   Then they introduce elliptic operators $D_{d+d^\Lambda}$ and $D_{dd^\Lambda}$ such that
   $\ker\Delta_{d+d^\Lambda}=\ker D_{d+d^\Lambda}$, $\ker\Delta_{dd^\Lambda}=\ker D_{dd^\Lambda}$.
   If $(M,\omega)$ is closed, Tseng and Yau have proven that both $H^k_{d+d^\Lambda}(M)$ and $H^k_{dd^\Lambda}(M)$ are finite dimensional
  (see \cite[Theorem 3.5, 3.16]{TY}).
   Then we can define the $kth$ symplectic Betti numbers
   $$\beta^{s,1}_k\triangleq\dim H^k_{d+d^\Lambda}(M),\,\,\,\beta^{s,2}_k\triangleq\dim H^k_{dd^\Lambda}(M)$$
    and the symplectic Euler numbers
    $$\chi^{s,1}(M)\triangleq\sum^{2n}_{k=0}(-1)^k\beta^{s,1}_k,\,\,\,\chi^{s,2}(M)\triangleq\sum^{2n}_{k=0}(-1)^k\beta^{s,2}_k.$$

  For an almost K\"ahler manifold, we denote the spaces of $d+d^\Lambda$ harmonic $k$-forms and $dd^\Lambda$ harmonic $k$-forms by
  $\mathcal{H}^k_{d+d^\Lambda}(M)$ and $\mathcal{H}^k_{dd^\Lambda}(M)$
  that are the kernel spaces of $\Delta_{d+d^\Lambda}$ and $\Delta_{dd^\Lambda}$, respectively.
   If $(M,g,J,\omega)$ is closed, Tseng and Yau gave the Hodge decompositions for $\mathcal{H}^k_{d+d^\Lambda}(M)$ and $\mathcal{H}^k_{dd^\Lambda}(M)$.
   Then they got $H^k_{d+d^\Lambda}(M)\cong\mathcal{H}^k_{d+d^\Lambda}$
   and $H^k_{dd^\Lambda}(M)\cong\mathcal{H}^k_{dd^\Lambda}$ on a closed symplectic manifold.
   Here, our first main result is considered on the complete noncompact almost K\"ahler manifold $(M,g,J,\omega)$.
   In the following section, the notation $L^2$ on $(M,g,J,\omega)$
   is meant with respect to the almost K\"ahler metric $g(\cdot,\cdot)=\omega(\cdot,J\cdot)$, where $J$ is an almost complex structure on $M$ compatible with $\omega$.

   Suppose that $(M,g,J,\omega)$ is a complete non-compact almost K\"ahler manifold of $2n$-dimension.
   In general, $J$ is not integrable, hence $\nabla J\neq 0$, $\nabla \omega\neq 0$, where $\nabla$ is the Levi-Civita connection induced
   from the metric $g$ (cf. \cite{Cha}).
   An almost K\"ahler manifold $(M,g,J,\omega)$ is of bounded geometry if $(\nabla)^kJ$,  $(\nabla)^k\omega$ have uniformly point-wise bounded on $M$.
   There are many complete non-compact almost K\"ahler manifolds with bounded geometry, for example, the universal covering
   $(\tilde{M},\tilde{g},\tilde{J},\tilde{\omega})$ of a closed almost K\"ahler manifold $(M,g,J,\omega)$ whose $\pi_1(M)$ is infinite
   is a noncompact almost K\"ahler manifold with bounded geometry.

  Notice that if $(M,g,J,\omega)$ is a noncompact almost K\"ahler manifold with bounded geometry,
  Hilbert spaces $L^2_l\Omega^k$ are the closure of $\Omega^k_c(M)$, $l\in\mathbb{Z}^+$, where $\Omega^k_c(M)$ is the space of
  $C^\infty$ $k$-forms with compact support in $M$ (cf. \cite[Remark 2.7]{Aubin}).

   Let $H$ be a Hilbert space and $T: dom(T)\rightarrow H$ be a (not necessarily bounded)
   linear operator defined on a dense linear subspace $dom(T)$ which is called (initial) domain.
   We call $T$ closed if its graph $gr(T)\triangleq \{(u,T(u)):u\in dom(T)\}\subset H\times H$ is closed.
   We say that $S: dom(S)\rightarrow H$ is an extension of $T$ and write $T\subset S$ if $dom(T)\subset dom(S)$
   and $S(u)=T(u)$ holds for all $u\in dom(T)$.
   We write $T=S$ if $dom(T)=dom(S)$ and $S(u)=T(u)$ holds for all $u\in dom(T)$.
   We call $T$ closable if and only if $T$ has a closed extension.
   Since the intersection of an arbitrary family of closed sets is closed again, a closable unbounded densely defined operator $T$
   has a unique minimal closure, also called minimal closed extension, i.e., a closed operator $T_{min}:dom(T_{min})\rightarrow H$
   which $T\subset T_{min}$ such that $T_{min}\subset S$ holds for any closed extension $S$ of $T$.
   Explicitly $dom(T_{min})$ consists of elements $u\in H$ for which these exists a sequence $(u_n)_{n\geq 0}$ in $dom(T)$ and an element
   $v$ in $H$ satisfying $\lim_{n\rightarrow\infty}u_n=u$ and $\lim_{n\rightarrow\infty}T(u_n)=v$.
   Then $v$ is uniquely determined by this property and we put $T_{min}(u)=v$.
   Equivalently, $dom(T_{min})$ is the Hilbert space completion of $dom(T)$ with respect to the inner product
   \begin{equation}
     <u,v>_{gr}= <u,v>_H+ <T(u),T(v)>_H.
   \end{equation}
   If not stated otherwise we always use the minimal closed extension as the closed extension of a closable unbounded densely defined
   linear operator.

   The adjoint of $T$ is the operator $T^*: dom(T^*)\rightarrow H$ whose domain consists of elements $v\in H$ for which there is an element
   $u$ in $H$ such that $<u',u>_H=<T(u'),v>_H$ holds for all $u'\in dom(T)$.
   Then $u$ is uniquely determined by this property and we put $T^*(v)=u$.
   Notice that $T^*$ may not have a dense domain in general.
   If $T$ is closable, then $T^*_{min}=T^*$ and $T_{min}=(T^*)^*$.
   We call $T$ symmetric if $T\subset T^*$ and self-adjoint if $T=T^*$.
   Any self-adjoint operator is necessarily closed and symmetric.
   A bounded operator $T:H\rightarrow H$ is always closed and is self-adjoint if and only if it is symmetric.
   We call $T$ essentially self-adjoint if $T_{min}$ is self-adjoint.
   The maximal closure $T_{max}$ of $T$ is defined by the adjoint of $(T^*)_{min}$.
  In fact, $dom(T_{max})$ is the space of all $u\in H$ such that $Tu\in H$ (as a distrbution).
   For any closure $\bar{T}$ of $T: dom(T)\rightarrow H$ we have $T_{min}\subset\bar{T}\subset T_{max}$.
   Hence if $T$ is essentially self-adjoint, then $T_{min}=T_{max}$.
   More details, see \cite{Ati,L¨¹ck,Shubin}.

   \begin{theo}\label{main 1}
   If $(M,g,J,\omega)$ is a $2n-$dimensional complete noncompact almost K\"ahler manifold with bounded geometry
   whose symplectic form $\omega$ is $d(sublinear)$,
   and $D_{d+d^\Lambda}$, $D_{dd^\Lambda}$ are essentially self-adjoint elliptic operators on $M$,
   then their $L^2$ symplectic harmonic forms satisfy $L^2\mathcal{H}^k_{d+d^\Lambda}=0$ and $L^2\mathcal{H}^k_{dd^\Lambda}=0$ , unless $k=n$.
   \end{theo}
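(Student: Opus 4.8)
The plan is to transplant to Tseng--Yau's symplectic Laplacians the $L^2$ Lefschetz argument by which Gromov and Hitchin confined the $L^2$ harmonic forms of a K\"ahler parabolic manifold to the middle degree. I begin with the reductions. Since $M$ is complete and the Tseng--Yau operators $D_{d+d^\Lambda}$, $D_{dd^\Lambda}$ are elliptic with the same kernels as $\Delta_{d+d^\Lambda}$, $\Delta_{dd^\Lambda}$, every form in $L^2\mathcal H^k_{d+d^\Lambda}$ or $L^2\mathcal H^k_{dd^\Lambda}$ is smooth, and a Gaffney-type integration by parts turns $\langle\Delta_{d+d^\Lambda}\alpha,\alpha\rangle=0$ into $d\alpha=d^\Lambda\alpha=0$, and $\langle\Delta_{dd^\Lambda}\alpha,\alpha\rangle=0$ into $dd^\Lambda\alpha=d^{*}\alpha=(d^\Lambda)^{*}\alpha=0$. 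Because $\Delta_{d+d^\Lambda}$ and $\Delta_{dd^\Lambda}$ commute with the bounded operators $L=\omega\wedge\cdot$ and $\Lambda=L^{*}$ --- which is how the Lefschetz decomposition of the harmonic spaces is obtained --- each of $L^2\mathcal H^k_{d+d^\Lambda}$, $L^2\mathcal H^k_{dd^\Lambda}$ splits as $\bigoplus_{r}L^{r}\bigl(L^2\mathcal{PH}^{k-2r}\bigr)$ into primitive $L^2$ harmonic pieces; and when $k\neq n$ every primitive degree $k-2r$ occurring is $<n$ (it is $\le k<n$ if $k<n$, and the Lefschetz decomposition forces $k-2r\le 2n-k<n$ if $k>n$). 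Finally, the Riemannian Hodge star $*$ interchanges the defining conditions of the two harmonic spaces, and on a primitive $p$-form $\beta$ one has $*\beta=c_{n,p}\,L^{n-p}(C\beta)$, where $C$ is the pointwise Weil operator of the compatible almost complex structure $J$: $C$ is a fibrewise isometry, commutes with $L$ and $\Lambda$, and preserves degree and primitivity, so comparing primitive parts shows that $C$ restricts to isometric isomorphisms $L^2\mathcal{PH}^{p}_{d+d^\Lambda}\cong L^2\mathcal{PH}^{p}_{dd^\Lambda}$. Hence it suffices to prove that $L^2\mathcal{PH}^{p}_{d+d^\Lambda}=0$ whenever $p<n$.

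So let $\beta\in L^2\mathcal{PH}^{p}_{d+d^\Lambda}$ with $p<n$. From $*\beta=c_{n,p}L^{n-p}(C\beta)$ and $L^{n-p}(C\beta)=\omega^{n-p}\wedge C\beta$,
\[
\|\beta\|_{L^2}^2=\int_M\beta\wedge *\beta=c_{n,p}\int_M\omega^{n-p}\wedge\beta\wedge C\beta .
\]
Here $C\beta$ is a primitive $L^2$ $dd^\Lambda$-harmonic $p$-form, and --- granting the key claim below that such a form of degree $<n$ is closed --- $d(\beta\wedge C\beta)=0$. Write $\omega=d\eta$ with $\|\eta(x)\|_g\le c\,(\rho(x_0,x)+1)$; then $\omega^{n-p}\wedge\beta\wedge C\beta=d\bigl(\eta\wedge\omega^{n-p-1}\wedge\beta\wedge C\beta\bigr)$. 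Choosing cut-off functions $\phi_R$ equal to $1$ on $B_R(x_0)$, supported in $B_{2R}(x_0)$, with $\|d\phi_R\|_g\le c/R$, and using $\|\eta\|_g\le c'R$ and $|C\beta|_g=|\beta|_g$ on $B_{2R}\setminus B_R$,
\[
\Bigl|\int_M d\phi_R\wedge\eta\wedge\omega^{n-p-1}\wedge\beta\wedge C\beta\Bigr|\le\frac{c}{R}\cdot c'R\int_{B_{2R}\setminus B_R}|\beta|_g^2\,\mathrm{vol}_g\le c''\,\|\beta\|_{L^2(M\setminus B_R)}^2\longrightarrow 0
\]
as $R\to\infty$, since $\beta\in L^2$. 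As $\int_M d\bigl(\phi_R\,\eta\wedge\omega^{n-p-1}\wedge\beta\wedge C\beta\bigr)=0$ and $\phi_R\,\omega^{n-p}\wedge\beta\wedge C\beta\to\omega^{n-p}\wedge\beta\wedge C\beta$ in $L^1$, we get $\int_M\omega^{n-p}\wedge\beta\wedge C\beta=0$, so $\|\beta\|_{L^2}=0$ and $\beta=0$. With the reductions this gives $L^2\mathcal H^k_{d+d^\Lambda}=L^2\mathcal H^k_{dd^\Lambda}=0$ for $k\neq n$.

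The step I expect to be the main obstacle is the key claim: a primitive $L^2$ $dd^\Lambda$-harmonic form of degree $<n$ is closed --- equivalently, $C$ carries $L^2$ symplectic-harmonic forms to closed forms, i.e.\ the symplectic Hodge--Riemann bilinear relations hold on $(M,g,J,\omega)$. On a K\"ahler manifold this is immediate, because there $C$ commutes with $\Delta_d$ and the symplectic-harmonic spaces reduce to pure-type de Rham harmonic forms; on a genuinely almost K\"ahler manifold the relevant spaces differ, and one must control how $C$ intertwines the four components $\mu,\partial,\bar\partial,\bar\mu$ of $d$ inside Tseng--Yau's fourth-order Laplacians. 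Once this compatibility is in hand, the reductions and the cut-off computation above complete the argument.
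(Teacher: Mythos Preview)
Your admitted ``key claim'' is a real gap, and in the genuinely almost K\"ahler setting there is no obvious reason why $C$ should carry $d+d^\Lambda$-harmonic forms to $d$-closed forms: the Weil operator does not commute with $d$ once $J$ is non-integrable, and the $dd^\Lambda$-harmonic condition only gives $d^*$-, $d^{\Lambda*}$- and $dd^\Lambda$-closedness, never $d$-closedness. So as it stands your cut-off computation cannot be completed.

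The paper's proof takes a structurally different and shorter route that sidesteps this obstacle entirely. It never introduces the primitive Lefschetz decomposition or the Weil operator $C$. Instead it argues directly on $L^2\mathcal H^k_{dd^\Lambda}$: for $k<n$ one has $L^{n-k}\alpha\in L^2\mathcal H^{2n-k}_{dd^\Lambda}$ by the commutation $[\Delta_{dd^\Lambda},L]=0$, and therefore $L^{n-k}\alpha$ is $d^*$-closed \emph{by the very definition} of $dd^\Lambda$-harmonic. On the other hand, writing $\omega=d\eta$ with $\eta$ sublinear and quoting Hitchin's theorem, $L^{n-k}\alpha=d(\eta\wedge\omega^{n-k-1}\wedge\alpha)$ lies in the closure of $d(L^2)$ inside $L^2\Omega^{2n-k}$. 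Thus $L^{n-k}\alpha$ is simultaneously $\Delta_d$-harmonic and in $\overline{d(L^2)}$, so the ordinary de~Rham $L^2$ Hodge decomposition~(\ref{Hodge deco}) forces $L^{n-k}\alpha=0$; the Lefschetz isomorphism of Proposition~\ref{Hodge decoL2} then gives $\alpha=0$. The $d+d^\Lambda$ case is obtained from the pointwise identity $*_g\,\Delta_{d+d^\Lambda}=\Delta_{dd^\Lambda}\,*_g$, so the Riemannian star --- not $C$ --- is what transfers between the two harmonic spaces.

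The upshot is that where you would need ``$dd^\Lambda$-harmonic $\Rightarrow$ $d$-closed'' (your key claim), the paper only needs ``$dd^\Lambda$-harmonic $\Rightarrow$ $d^*$-closed'', which is free, and then lets membership in $\overline{d(L^2)}$ supply the missing $d$-closedness. One point worth your attention: the Hitchin step writes $L^{n-k}\alpha=d(\eta\wedge\omega^{n-k-1}\wedge\alpha)$, which presupposes $d\alpha=0$; the paper treats this as part of its hypothesis (``for every closed $L^2$ $k$-form $\alpha$'') before specializing to the $dd^\Lambda$-harmonic case, so you should think about whether that closedness is available --- or whether the argument is more naturally run first for $d+d^\Lambda$-harmonic forms (which are $d$-closed by definition) and then transferred to $dd^\Lambda$ via $*_g$.
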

   In general, the symplectic Betti numbers $\beta^s_k$, $0\leq k\leq 2n$, are not topological invariants.
   If the closed symplectic manifold $(M,\omega)$ satisfies the hard Lefschetz property, then
    $$H^k_{d+d^\Lambda}(M)\cong H^k_{dR}(M)\cong H^{2n-k}_{dd^\Lambda}(M)$$
   and $\chi(M)=\chi^{s,1}(M)=\chi^{s,2}(M)$.
   Suppose that $(M,\omega)$ is a closed symplectic manifold and $(\tilde{M},\tilde{\omega})$ is the universal covering space.
   We consider $L^2$ symplectic harmonic forms on $(\tilde{M},\tilde{\omega})$ and define $L^2$-symplectic Euler characteristics
  $L^2\chi^{s,1}(\tilde{M)}$,
   $L^2\chi^{s,2}(\tilde{M})$ on it.
    Then we get the following theorem.
   \begin{theo}\label{main 3}
    Let $(M,\omega)$ be a $2n$-dimensional closed symplectic manifold,
    $\Pi:(\tilde{M},\tilde{\omega})\rightarrow (M,\omega)$ the universal covering map.
    If $(M,\omega)$ satisfies the hard Lefschetz property, then
    $$
    L^2\chi^{s,1}(\tilde{M)}=L^2\chi^{s,2}(\tilde{M})=L^2\chi(\tilde{M})=\chi(M)=\chi^{s,1}(M)=\chi^{s,2}(M).
    $$
   \end{theo}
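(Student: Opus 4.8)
The plan is to deduce the theorem from Atiyah's $L^2$-index theorem for the universal covering, together with the cohomological consequences of the hard Lefschetz property on $M$ that were already noted above, namely $H^k_{d+d^\Lambda}(M)\cong H^k_{dR}(M)\cong H^{2n-k}_{dd^\Lambda}(M)$ and hence $\chi(M)=\chi^{s,1}(M)=\chi^{s,2}(M)$.

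First I would set up $L^2$-Hodge theory on $\tilde M$. Since $\Pi$ covers a closed manifold, $(\tilde M,\tilde g)$ is complete with bounded geometry and the deck group $\Gamma=\pi_1(M)$ acts freely, cocompactly and isometrically. The symplectic ``Laplacians'' $\Delta_{d+d^\Lambda}$, $\Delta_{dd^\Lambda}$ are cohomologically meaningful only through Tseng and Yau's genuinely elliptic operators $D_{d+d^\Lambda}$, $D_{dd^\Lambda}$, which have the same kernels \cite{TY}; I would lift these, together with the de Rham Laplacian, to $\Gamma$-invariant, essentially self-adjoint elliptic operators on $L^2\Omega^\bullet(\tilde M)$. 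Standard covering-space spectral theory then shows that their $L^2$-kernels $L^2\mathcal H^k_{d+d^\Lambda}(\tilde M)$, $L^2\mathcal H^k_{dd^\Lambda}(\tilde M)$ and $L^2\mathcal H^k(\tilde M)$ are closed $\Gamma$-invariant subspaces of finite von Neumann ($\Gamma$-)dimension, so the alternating sums $L^2\chi^{s,1}(\tilde M)$, $L^2\chi^{s,2}(\tilde M)$, $L^2\chi(\tilde M)$ are well defined; moreover these spaces of $L^2$ harmonic forms coincide with the reduced $L^2$ symplectic cohomology groups, giving $L^2$ Hodge decompositions $L^2\Omega^k=L^2\mathcal H^k_{d+d^\Lambda}(\tilde M)\oplus\overline{\mathrm{im}}$, and similarly for the other two complexes.

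Next I would invoke Atiyah's $L^2$-index theorem. For the rolled-up de Rham complex it gives $L^2\chi(\tilde M)=\chi(M)$ directly. For the two symplectic cohomologies I would use that each of $H^k_{d+d^\Lambda}(M)$, $H^k_{dd^\Lambda}(M)$ is the cohomology of a $\Gamma$-equivariant elliptic complex pulled back from $M$ (Tseng and Yau's primitive complex, equivalently the rolled-up version of $D_{d+d^\Lambda}$, $D_{dd^\Lambda}$ organised by parity); Atiyah's index theorem for elliptic complexes over a cocompact covering then yields $L^2\chi^{s,1}(\tilde M)=\chi^{s,1}(M)$ and $L^2\chi^{s,2}(\tilde M)=\chi^{s,2}(M)$, since the local index densities agree on $\tilde M$ and on $M$. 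Combined with $\chi(M)=\chi^{s,1}(M)=\chi^{s,2}(M)$ from hard Lefschetz, this closes the chain of equalities. An alternative to the elliptic-complex route is to transfer the hard-Lefschetz identification itself to the $L^2$ setting: the Lefschetz triple $L=\tilde\omega\wedge\,\cdot$, its adjoint $\Lambda$ and the degree operator act fibrewise, and $L$, $\Lambda$ are bounded on $L^2$, so the purely algebraic part of Tseng and Yau's Hodge theory and of Yan's proof \cite{YanH} of Mathieu's theorem \cite{Mathieu} runs verbatim on $\tilde M$, yielding $\Gamma$-dimension-preserving isomorphisms $L^2\mathcal H^k_{d+d^\Lambda}(\tilde M)\cong L^2\mathcal H^k(\tilde M)\cong L^2\mathcal H^{2n-k}_{dd^\Lambda}(\tilde M)$ and hence again $L^2\chi^{s,1}(\tilde M)=L^2\chi^{s,2}(\tilde M)=L^2\chi(\tilde M)$.

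I expect the main obstacle to be exactly this middle step. The natural symplectic Laplacians are not elliptic, and the elliptic replacements $D_{d+d^\Lambda}$, $D_{dd^\Lambda}$ preserve form degree, so a priori $\chi^{s,1}(M)$ is only the super-dimension of a kernel rather than manifestly an index; one must first repackage the symplectic cohomology as a genuine elliptic complex, or transfer the Lefschetz $\mathfrak{sl}(2)$-structure to the non-compact $\tilde M$, before Atiyah's theorem can be brought to bear. Along the way one has to verify that the relevant operators have closed range on $L^2\Omega^\bullet(\tilde M)$, so that reduced $L^2$-cohomology really is the space of $L^2$ harmonic forms, and that von Neumann dimension is additive along the (weakly) exact sequences used; both are standard over a cocompact covering but must be invoked in the correct order. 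The torsion terms coming from the compatible almost complex structure affect only lower-order parts of the operators and do not disturb any of the cohomological conclusions, since the hard Lefschetz hypothesis and all the identifications used are cohomological.
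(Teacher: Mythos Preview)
Your overall strategy---reduce to Atiyah's $L^2$-index theorem on the de Rham side, and identify the symplectic and de Rham data under hard Lefschetz---matches the paper's. However, the step you flag as the main obstacle is exactly where your proposal has a gap, and the paper fills it by a different, more concrete mechanism than either of your two routes.

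Your route 1 (apply Atiyah directly to the symplectic cohomologies) is, as you yourself note, not immediately available: the operators $D_{d+d^\Lambda}$, $D_{dd^\Lambda}$ preserve degree and do not package $\chi^{s,i}$ as an index of an elliptic complex. Invoking ``Tseng and Yau's primitive complex'' is a plausible repair, but it is neither developed in your sketch nor used in this paper, so as written this route is incomplete.

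Your route 2 is where the real issue lies. The claim that ``the purely algebraic part of Yan's proof runs verbatim on $\tilde M$'' and directly gives $L^2\mathcal H^k_{d+d^\Lambda}(\tilde M)\cong L^2\mathcal H^k(\tilde M)$ is not justified. Even on a closed manifold the identification $H^k_{d+d^\Lambda}\cong H^k_{dR}$ is a \emph{cohomological} consequence of the $dd^\Lambda$-Lemma, not a pointwise identity between harmonic spaces; a $\Delta_d$-harmonic form need not satisfy $(dd^\Lambda)^*\alpha=0$, and the $\mathfrak{sl}(2)$ action on forms does not by itself produce a $d+d^\Lambda$-harmonic representative in an $L^2$ de Rham class. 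What is needed on $\tilde M$ is precisely an $L^2$-$dd^\Lambda$-Lemma, and this is the paper's key technical step (Proposition~\ref{key prop}): it transfers the $dd^\Lambda$-Lemma from $M$ to $\tilde M$ by working on a fundamental domain $F\cong M$, using elliptic estimates for $dd^*$ and $d^\Lambda d^{\Lambda*}$ on the closed base to get $L^2_2$ (resp.\ $L^2_3$) bounds $\|\eta\|\le c_M\|\alpha\|$, and then summing over $\Gamma$-translates. From this the paper obtains (Proposition~\ref{key prop2}) the isomorphism $L^2H^k_{d+d^\Lambda}(\tilde M)\cong L^2H^k_{dR}(\tilde M)$, after which Atiyah's theorem is applied only on the de Rham side. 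Your proposal does not supply this $L^2$-$dd^\Lambda$-Lemma, nor any substitute for it, so the chain of equalities is not closed.
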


  At last, we want to consider Chern conjecture on a closed symplectic parabolic manifold.
  One of the powerful tools for Gromov achieving Chern conjecture on a K\"{a}hler manifold is that the Lefschetz operator commutes with the Hodge Laplacian operator $\Delta_d=dd^*+d^*d$.
   But in general, the Lefschetz operator does not commute with $\Delta_d$ on symplectic manifold.
   This makes us think that de Rham cohomology and its harmonic forms are inappropriate to be seen as a tool to solve problems on symplectic manifolds.
   Fortunately, by considering Tseng and Yau's new symplectic cohomologies on symplectic parabolic manifold, we get some interesting results.
   At last, with the hard Lefschetz property which ensures that de Rham cohomology consists with the new symplectic cohomology,
   we can obtain the third main result.
  \begin{theo}\label{main 2}
   If $(M,\omega)$ is a $2n$-dimensional closed symplectic parabolic manifold which satisfies the hard Lefschetz property,
    then the Euler number satisfies $(-1)^n\chi(M)\geq0$.
   \end{theo}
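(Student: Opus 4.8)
The plan is to transplant Gromov's vanishing-plus-index argument to the universal cover, but run it through the \emph{symplectic} harmonic forms of Tseng--Yau rather than the de Rham ones, since for an almost K\"ahler metric the Lefschetz operator need not commute with $\Delta_d$ and the ordinary $L^2$ harmonic forms need not concentrate in the middle degree, whereas the symplectic ones do. Let $\Pi:(\tilde M,\tilde g,\tilde J,\tilde\omega)\to(M,g,J,\omega)$ be the universal covering with the lifted almost K\"ahler structure and let $\Gamma=\pi_1(M)$ act by deck transformations. Since $(M,\omega)$ is symplectic parabolic, by definition $\tilde\omega$ is $d(sublinear)$ on $\tilde M$. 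First I would record that $(\tilde M,\tilde g)$ is complete (as $M$ is closed) and noncompact: on a compact manifold $d(sublinear)$ is the same as exactness (the distance function is bounded), and a symplectic form is never exact there, so symplectic parabolicity already forces $\Gamma$ to be infinite and $\tilde M$ noncompact. Hence $(\tilde M,\tilde g,\tilde J,\tilde\omega)$ is precisely a complete noncompact almost K\"ahler $2n$-manifold whose symplectic form is $d(sublinear)$, and Theorem~\ref{main 1} applies verbatim: $L^2\mathcal H^k_{d+d^\Lambda}(\tilde M)=0$ and $L^2\mathcal H^k_{dd^\Lambda}(\tilde M)=0$ for every $k\neq n$.

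Next I would use the definition of the $L^2$-symplectic Euler characteristic as the $\Gamma$-alternating sum of von Neumann Betti numbers,
\[
L^2\chi^{s,1}(\tilde M)=\sum_{k=0}^{2n}(-1)^k\dim_\Gamma L^2\mathcal H^k_{d+d^\Lambda}(\tilde M),
\]
and insert the vanishing just obtained; only the $k=n$ term survives, so
\[
L^2\chi^{s,1}(\tilde M)=(-1)^n\dim_\Gamma L^2\mathcal H^n_{d+d^\Lambda}(\tilde M).
\]
Because $(M,\omega)$ satisfies the hard Lefschetz property, Theorem~\ref{main 3} identifies $L^2\chi^{s,1}(\tilde M)$ with $\chi(M)$. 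Combining the two displays and using that the $\Gamma$-dimension of any Hilbert $\Gamma$-module is nonnegative gives
\[
(-1)^n\chi(M)=(-1)^n\,L^2\chi^{s,1}(\tilde M)=\dim_\Gamma L^2\mathcal H^n_{d+d^\Lambda}(\tilde M)\ge 0,
\]
which is the assertion. The computation with $\mathcal H^\bullet_{dd^\Lambda}$ and $L^2\chi^{s,2}(\tilde M)$ yields the same inequality independently.

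The one genuinely delicate point, which is exactly what is packaged into Theorem~\ref{main 3}, is the $L^2$-index step $L^2\chi^{s,1}(\tilde M)=\chi^{s,1}(M)$: it requires Atiyah's $\Gamma$-index theorem applied to the Tseng--Yau elliptic operators $D_{d+d^\Lambda}$ (whose kernel equals the symplectic harmonic space, so that the heat-kernel $\Gamma$-trace is well defined and the parametrix estimates behind Atiyah's theorem go through for these elliptic stand-ins of the non-elliptic fourth-order Laplacians), together with an $L^2$-Hodge decomposition on the complete manifold $\tilde M$ realizing the reduced $L^2$ symplectic cohomology by $L^2$ symplectic harmonic forms. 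Granting Theorems~\ref{main 1} and~\ref{main 3}, the proof of Theorem~\ref{main 2} is then only the bookkeeping above: middle-degree concentration of $L^2$ symplectic harmonic forms under symplectic parabolicity, the Euler-characteristic identity under hard Lefschetz, and the nonnegativity of von Neumann dimension.
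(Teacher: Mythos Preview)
Your argument is correct and is essentially the paper's own proof: vanishing of the $L^2$ symplectic harmonic spaces outside the middle degree (Theorem~\ref{main 1}) plus the Euler-characteristic identity under hard Lefschetz (Theorem~\ref{main 3}) plus nonnegativity of $\dim_\Gamma$. Your observation that symplectic parabolicity forces $\tilde M$ to be noncompact is a point the paper leaves implicit.

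One small difference in packaging is worth flagging. You route the index step through $L^2\chi^{s,1}(\tilde M)$ and speculate that Theorem~\ref{main 3} is obtained by applying Atiyah's $\Gamma$-index theorem directly to the Tseng--Yau elliptic operator $D_{d+d^\Lambda}$. The paper does \emph{not} do this: instead it first proves a degree-by-degree isomorphism $L^2H^k_{d+d^\Lambda}(\tilde M)\cong L^2H^k_{dR}(\tilde M)$ on the cover (Proposition~\ref{key prop2}, which uses an $L^2$-$dd^\Lambda$-Lemma lifted from the hard Lefschetz property on $M$), transfers the middle-degree concentration to $L^2H^\bullet_{dR}(\tilde M)$, and only then invokes Atiyah's classical theorem for the de Rham complex to get $L^2\chi(\tilde M)=\chi(M)$. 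This sidesteps the issue you raise of fitting $D_{d+d^\Lambda}$ into an Atiyah-type index formula: there is no obvious elliptic \emph{complex} whose Euler characteristic is $\chi^{s,1}$, whereas de Rham gives one for free. Your proof still goes through since you only use Theorem~\ref{main 3} as a black box, but the mechanism behind it is the de Rham detour rather than a symplectic index theorem.
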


  \begin{rem}
  1) Jianguo Cao and Frederico Xavier in {\rm \cite{Cao}}(see also J. Jost and K. Zuo's work {\rm \cite{Jost}}) have proven that a bounded closed $k$-form, $k\geq1$, is $d(sublinear)$ on
  a complete simply-connected manifold of non-positive sectional curvature.
  By their Lemma 3 in  {\rm \cite{Cao}}, we can also get:
  Let $M$ be a closed $2n$-Riemannian manifold of non-positive sectional curvature.
  If $M$ is homotopy equivalent with a closed symplectic manifold which satisfies the hard Lefschetz property,
   then the Euler number of $M$ satisfies the inequality $(-1)^n\chi(M)\geq0$.

   2)It is well known that a closed K\"{a}hler manifold $M$ such that $\pi_1(M)$ is
   word-hyperbolic in the sense of {\rm\cite{Gromov1}} and $\pi_2(M)=0$ is a  K\"{a}hler hyperbolic manifold.
   Hence, we conjecture that:
    Let $(M,\omega)$ be a $2n$-dimensional closed symplectic manifold,
    if $\pi_1(M)$ is infinite and $\pi_2(M)=0$, then $\omega$ is $\tilde{d}(bounded)$.

  \end{rem}
 Since the hard Lefschetz property in this article is a technical condition, we have the following question:
 \begin{ques}
 If we drop the condition that $(M,\omega)$ satisfies the hard Lefschetz property in {\rm Theorem \ref{main 3}} and {\rm Theorem \ref{main 2}},
 could the same conclusion hold?
 \end{ques}

 \section{$L^2$ symplectic Hodge theory}\setcounter{equation}{0}
   Let us recall some definitions and some results of Hodge theory.
  Let $M$ be a closed oriented Riemannian manifold with metric $g$.
  The Hodge star operator $*_g: \Omega^k(M)\rightarrow \Omega^{m-k}(M)$ is a linear map which satisfies $\alpha\wedge *_g\beta=(\alpha,\beta)_{g} d\text{vol}_g$ for all $\alpha,\beta\in\Omega^k(M)$.
  Here $\Omega^k(M)$ is the space of the smooth $k$-forms on $M$.
  We denote the adjoint operator of the differential operator $d$ by $d^\ast$ associated to $g$.
  By a direct calculation, we will find that $d^\ast=(-1)^{mk+m+1}*_gd*_g$ on $\Omega^k(M)$.
  A form $\alpha$ is called harmonic if it is both $d$-closed and $d^\ast$-closed.
  The Laplacian operator is given by $\Delta_g=dd^*+d^*d:\Omega^k(M)\rightarrow\Omega^k(M)$,
  then $\alpha$ is harmonic if and only if $\Delta_g\alpha=0$. By the theory of elliptic operator we conclude that the kernel of $\Delta_g$ is finite dimensional.
  And the Hodge decomposition tells us every cohomology class has a unique harmonic representative.

   Let $(M,\omega)$ be a closed symplectic $2n$-manifold.
   Symplectic Hodge theory was introduced by Ehresmann and Libermann \cite{EL} and was rediscovered by Brylinski \cite{BrA}.
  They defined the symplectic star operator analogously to the Hodge star operator but with respect to the symplectic form $\omega$.
  The symplectic star operator $*_s$ acts on a differential
  $k$-form $\alpha$ by
  \begin{eqnarray*}
  \alpha\wedge*_s\alpha' &=&(\omega^{-1})^k(\alpha,\alpha')d{\rm vol} \\
  &=& \frac{1}{k!}(\omega^{-1})^{i_1j_1}\cdot\cdot\cdot(\omega^{-1})^{i_kj_k}\alpha_{i_1\cdot\cdot\cdot i_k}\alpha'_{j_1\cdot\cdot\cdot j_k}\frac{\omega^n}{n!}
  \end{eqnarray*}
  with repeated indices summed over.
  Direct calculation shows
  \begin{equation}\label{formula}
   \alpha\wedge*_s\beta=(-1)^k\beta\wedge*_s\alpha,
  \end{equation}
   where $\alpha$ and $\beta$ are $k$-forms.
  The adjoint of the standard exterior derivative with respect to $\omega$ takes the form (cf. \cite{Koszul})
  $$
  d^\Lambda=(-1)^{k+1}*_sd*_s.
  $$
  Fix an almost K\"ahler structure $(g,J,\omega)$ on $(M,\omega)$.
  See some standard Hodge adjoint of the differential operators.
  Denote by
  $$d^*=-*_gd*_g,$$ $$d^{\Lambda*}=*_gd^{\Lambda}*_g,$$ and $$(dd^{\Lambda})^*=(-1)^{k+1}*_gdd^{\Lambda}*_g$$
  act on $k$-forms.
  By using the properties $d^2=(d^\Lambda)^2=0$ and the anti-commutively $dd^\Lambda=-d^\Lambda d$,
  we will find that any form that is $dd^\Lambda$-exact is also $d$- and $d^\Lambda$-closed.
  This gives a differential complex
  \begin{equation}\label{complex}
  \Omega^k\xrightarrow{dd^\Lambda}\Omega^k\xrightarrow{d+d^\Lambda}\Omega^{k+1}\oplus\Omega^{k-1}.
  \end{equation}

  Tseng and Yau \cite{TY} considered the symplectic cohomology group $H^k_{d+ d^\Lambda}(M)$ which are just the symplectic version of well-known cohomologies in complex geometry already studied by Kodaira and Spencer \cite{Kodaira}, for example. With complex (\ref{complex}), they define
  \begin{equation}\label{symplectic coho}
    H^k_{d+d^{\Lambda}}(M)=\frac{\ker(d+d^{\Lambda})\cap\Omega^k(M)}{\text{im}\,dd^{\Lambda}\cap\Omega^k(M)}.
  \end{equation}
  From the differential complex, the Laplacian operator associated with
  the cohomology is
  $$
  \Delta_{d+d^{\Lambda}}=dd^{\Lambda}(dd^{\Lambda})^*+\lambda(d^*d+d^{\Lambda*}d^{\Lambda}),
  $$
  where we have inserted an undetermined real constant $\lambda> 0$ that gives the relative weight between the terms.
  The Laplacian is a fourth-order self-adjoint differential operator but not elliptic.
  However, Tseng and Yau introduce a related fourth-order elliptic operator (cf. \cite{Kodaira,TY})
  \begin{equation}\label{elliptic}
    D_{d+d^{\Lambda}}=dd^{\Lambda}(dd^{\Lambda})^*+(dd^{\Lambda})^*(dd^{\Lambda})+d^*d^{\Lambda}d^{\Lambda*}d
    +d^{\Lambda*}dd^*d^{\Lambda}+\lambda(d^*d+d^{\Lambda*}d^{\Lambda}).
  \end{equation}
  The solution space of $D_{d+d^{\Lambda}}\alpha=0$ is identical to that of $\Delta_{d+d^{\Lambda}}\alpha=0$.
  A differential form $\alpha$ is called $d+d^\Lambda$-harmonic if $\Delta_{d+d^{\Lambda}}\alpha=0$,
  or equivalently,
  $$
  d\alpha=d^{\Lambda}\alpha=0 \,\,\, {\rm and} \,\,\, (dd^{\Lambda})^*\alpha=0.
  $$
  Denote the space of $d+d^\Lambda$-harmonic $k$-forms by $\mathcal{H}^k_{d+d^\Lambda}(M)$.
  Tseng and Yau \cite{TY} proved that the space of $d+d^\Lambda$-harmonic $k$-forms $\mathcal{H}^k_{d+d^\Lambda}(M)$ are finite dimensional and
  isomorphic to $H^k_{d+d^\Lambda}(M)$.
  \begin{prop}\label{Yau theorem}
  ({\rm\cite[Theorem 3.5]{TY}})
  Let $(M,g,J,\omega)$ be a closed almost K\"ahler manifold.
  Then:

  (1) $\dim\mathcal{H}^k_{d+d^\Lambda}(M)<\infty$.

  (2) There is an orthogonal decomposition
  $$
  \Omega^k=\mathcal{H}^k_{d+d^\Lambda}\oplus dd^\Lambda\Omega^k\oplus(d^*\Omega^{k+1}+d^{\Lambda*}\Omega^{k-1}).
  $$

  (3) There is a caninical isomorphism: $\mathcal{H}^k_{d+d^\Lambda}(M)\cong H^k_{d+d^\Lambda}(M)$.
  \end{prop}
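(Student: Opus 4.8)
The plan is to follow the Kodaira--Spencer strategy for fourth-order ``Bott--Chern type'' operators and reduce the entire proposition to the single analytic fact that $D_{d+d^\Lambda}$ is elliptic. First I would record the pointwise algebraic identities satisfied by the principal symbols of the first-order building blocks. Writing $\xi^\sharp$ for the $g$-dual of a covector $\xi$, and using the Koszul formula $d^\Lambda=[\iota_\omega,d]$ together with the fact that $*_g$, $*_s$, $\Lambda$ and $J$ are zeroth-order (so that the non-integrability of $J$ enters only in lower-order terms), the symbols are, up to a scalar $\pm i$, $\sigma(d)(\xi)=e_\xi$, $\sigma(d^*)(\xi)=\iota_{\xi^\sharp}$, $\sigma(d^\Lambda)(\xi)=\iota_{J\xi^\sharp}$ and $\sigma(d^{\Lambda*})(\xi)=e_{(J\xi^\sharp)^\flat}$, where $e$ and $\iota$ denote exterior and interior multiplication. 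Because the metric is $\omega$-compatible one has $g(\xi^\sharp,J\xi^\sharp)=0$ and $|J\xi^\sharp|=|\xi|$, so the four operators $a=e_\xi$, $a^{*}=\iota_{\xi^\sharp}$, $c=\iota_{J\xi^\sharp}$, $c^{*}=e_{(J\xi^\sharp)^\flat}$ satisfy $a^2=(a^{*})^2=c^2=(c^{*})^2=0$, $\{a,a^{*}\}=\{c,c^{*}\}=|\xi|^2$, and all four cross-anticommutators vanish; that is, they form two anticommuting creation--annihilation pairs.

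This makes the symbol computation essentially free, and I expect it to be the main obstacle only in the sense that it is the one genuinely clever step. Since the weight-$\lambda$ terms are merely second order, the fourth-order symbol is
\[
\sigma_4(D_{d+d^\Lambda})(\xi)=a\,c\,c^{*}a^{*}+c^{*}a^{*}a\,c+a^{*}c\,c^{*}a+c^{*}a\,a^{*}c .
\]
Introducing the commuting orthogonal projections $P=a^{*}a$, $P^{\perp}=aa^{*}$ and $Q=c^{*}c$, $Q^{\perp}=cc^{*}$ and normalizing $|\xi|=1$, each term collapses to a product of projections, namely $Q^{\perp}P^{\perp}$, $PQ$, $Q^{\perp}P$ and $P^{\perp}Q$, and the sum telescopes to $Q^{\perp}(P^{\perp}+P)+Q(P+P^{\perp})=\mathrm{Id}$. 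By homogeneity $\sigma_4(D_{d+d^\Lambda})(\xi)=|\xi|^4\,\mathrm{Id}$, which is invertible for $\xi\neq0$, so $D_{d+d^\Lambda}$ is a self-adjoint fourth-order elliptic operator. I would emphasize here that $\Delta_{d+d^\Lambda}$ fails this test: its fourth-order symbol is the lone projection $Q^{\perp}P^{\perp}$, which is highly degenerate, and the precise combinatorial completion built into $D_{d+d^\Lambda}$ is exactly what restores ellipticity.

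With ellipticity in hand the remaining steps are routine on a closed manifold. Integration by parts gives $\langle\Delta_{d+d^\Lambda}\alpha,\alpha\rangle=\|(dd^\Lambda)^{*}\alpha\|^2+\lambda\|d\alpha\|^2+\lambda\|d^\Lambda\alpha\|^2$, while $\langle D_{d+d^\Lambda}\alpha,\alpha\rangle$ equals this same expression with the extra nonnegative terms $\|dd^\Lambda\alpha\|^2+\|d^{\Lambda*}d\alpha\|^2+\|d^*d^\Lambda\alpha\|^2$; since $d\alpha=d^\Lambda\alpha=0$ forces those three extra terms to vanish, both quadratic forms vanish under exactly the same conditions, whence $\ker D_{d+d^\Lambda}=\ker\Delta_{d+d^\Lambda}=\mathcal H^k_{d+d^\Lambda}$. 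Standard elliptic theory then yields $\dim\mathcal H^k_{d+d^\Lambda}<\infty$, which is part (1), together with the $L^2$-orthogonal splitting $\Omega^k=\ker D_{d+d^\Lambda}\oplus\mathrm{im}\,D_{d+d^\Lambda}$ with closed range. For part (2) I would verify directly, using $d^2=(d^\Lambda)^2=0$, $dd^\Lambda=-d^\Lambda d$ and $(dd^\Lambda)^{*}=d^{\Lambda*}d^{*}$, that the three spaces $\mathcal H^k_{d+d^\Lambda}$, $dd^\Lambda\Omega^k$ and $d^{*}\Omega^{k+1}+d^{\Lambda*}\Omega^{k-1}$ are mutually orthogonal, and that every term of $D_{d+d^\Lambda}\alpha$ lands in $dd^\Lambda\Omega^k+d^{*}\Omega^{k+1}+d^{\Lambda*}\Omega^{k-1}$. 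Thus $\mathrm{im}\,D_{d+d^\Lambda}$ is contained in that sum, and comparing against the orthogonal complement of $\mathcal H^k_{d+d^\Lambda}=\ker D_{d+d^\Lambda}$ upgrades the inclusion to equality, giving the stated decomposition.

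Finally, part (3) follows from the decomposition. The map $h\mapsto[h]$ is well defined because harmonic forms are $d$- and $d^\Lambda$-closed, hence represent classes in $H^k_{d+d^\Lambda}(M)$; it is injective since $\mathcal H^k_{d+d^\Lambda}\cap dd^\Lambda\Omega^k=\{0\}$ by orthogonality. For surjectivity I would take a $(d+d^\Lambda)$-closed form $\alpha$ and decompose it as $\alpha=h+dd^\Lambda\beta+\rho$ with $\rho\in d^{*}\Omega^{k+1}+d^{\Lambda*}\Omega^{k-1}$; applying $d$ and $d^\Lambda$ and using $dh=d^\Lambda h=0$ and $dd^\Lambda\beta=-d^\Lambda d\beta$ shows $d\rho=d^\Lambda\rho=0$, so that $\|\rho\|^2=\langle d^{*}\gamma+d^{\Lambda*}\delta,\rho\rangle=\langle\gamma,d\rho\rangle+\langle\delta,d^\Lambda\rho\rangle=0$ and hence $\rho=0$. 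Then $\alpha=h+dd^\Lambda\beta$ is cohomologous to the harmonic form $h$, establishing the canonical isomorphism $\mathcal H^k_{d+d^\Lambda}(M)\cong H^k_{d+d^\Lambda}(M)$.
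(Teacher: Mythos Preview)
The paper does not actually prove this proposition; it is quoted from Tseng--Yau \cite[Theorem~3.5]{TY} and used as a black box. Your argument is correct and is precisely the Kodaira--Spencer/Tseng--Yau proof that the paper is citing: the only substantive analytic step is the ellipticity of $D_{d+d^\Lambda}$, and your symbol computation via the two anticommuting creation--annihilation pairs $(e_\xi,\iota_{\xi^\sharp})$ and $(\iota_{J\xi^\sharp},e_{(J\xi^\sharp)^\flat})$, collapsing the four fourth-order terms to the commuting projections $P,P^\perp,Q,Q^\perp$ and summing to $|\xi|^4\,\mathrm{Id}$, is exactly how it is done in the source. The remaining deductions you give---the kernel identity $\ker D_{d+d^\Lambda}=\ker\Delta_{d+d^\Lambda}$ from the quadratic forms, the orthogonality of the three summands, the inclusion $\mathrm{im}\,D_{d+d^\Lambda}\subset dd^\Lambda\Omega^k\oplus(d^*\Omega^{k+1}+d^{\Lambda*}\Omega^{k-1})$ term by term, and the harmonic-representative argument for part~(3)---are the standard formal consequences, so there is nothing further to compare: you have simply supplied the proof the paper chose to cite rather than reproduce.
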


   Interestingly, simply reversing the arrows of the complex (\ref{complex}) leads to
   another symplectic cohomology group $H^k_{dd^\Lambda}(M)$ (cf. \cite{TY}),
  \begin{equation}\label{symplectic coho2}
    H^k_{dd^{\Lambda}}(M)=\frac{\ker(dd^{\Lambda})\cap\Omega^k(M)}{({\rm im}d+{\rm im}d^{\Lambda})\cap\Omega^k(M)}.
  \end{equation}
  The Laplacian operator associated with
  the cohomology is
   $$
  \Delta_{dd^{\Lambda}}=(dd^{\Lambda})^*dd^{\Lambda}+\lambda(dd^*+d^{\Lambda}d^{\Lambda*}).
  $$
  The Laplacian is also not elliptic.
  A differential form $\alpha$ is called $dd^\Lambda$-harmonic if $\Delta_{dd^{\Lambda}}\alpha=0$,
  or equivalently,
  $$
  d^*\alpha=0,\,\,\,d^{\Lambda*}\alpha=0 \,\,\, {\rm and} \,\,\, dd^{\Lambda}\alpha=0.
  $$
  Denote the space of $dd^\Lambda$-harmonic $k$-forms by $\mathcal{H}^k_{dd^\Lambda}(M)$.
   Then Tseng and Yau introduce a fourth-order elliptic operator
  \begin{equation}\label{elliptic 2}
    D_{dd^{\Lambda}}=(dd^{\Lambda})^*dd^{\Lambda}+(dd^{\Lambda})(dd^{\Lambda})^*+dd^{\Lambda*}d^{\Lambda}d^*
    +d^{\Lambda}d^*dd^{\Lambda*}+\lambda(dd^*+d^{\Lambda}d^{\Lambda*}),
  \end{equation}
  which satisfies $\ker D_{dd^{\Lambda}}=\ker\Delta_{dd^{\Lambda}}=\mathcal{H}^k_{dd^\Lambda}(M)$.
  Tseng and Yau \cite{TY} proved that the space of $dd^\Lambda$-harmonic $k$-forms $\mathcal{H}^k_{dd^\Lambda}(M)$ are finite dimensional and
  isomorphic to $H^k_{dd^\Lambda}(M)$.
  \begin{prop}\label{Yau theorem2}
  ({\rm\cite[Theorem 3.16]{TY}})
  Let $(M,g,J,\omega)$ be a closed almost K\"ahler manifold.
  Then:

  (1) $\dim\mathcal{H}^k_{dd^\Lambda}(M)<\infty$.

  (2) There is an orthogonal decomposition
  $$
  \Omega^k=\mathcal{H}^k_{dd^\Lambda}\oplus (dd^\Lambda)^*\Omega^k\oplus(d\Omega^{k-1}+d^{\Lambda}\Omega^{k+1}).
  $$

  (3) There is a caninical isomorphism: $\mathcal{H}^k_{dd^\Lambda}(M)\cong H^k_{dd^\Lambda}(M)$.
  \end{prop}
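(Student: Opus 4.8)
The plan is to run the classical Kodaira--Hodge argument, with the fourth-order elliptic operator $D_{dd^\Lambda}$ of (\ref{elliptic 2}) playing the role usually played by the ordinary Laplacian. The substantive analytic input is the ellipticity of $D_{dd^\Lambda}$ together with the kernel identity $\ker D_{dd^\Lambda}=\mathcal{H}^k_{dd^\Lambda}(M)$, both already recorded in the discussion preceding the statement; granting these, the three assertions are formal. I would nonetheless first confirm the kernel identity, since it exhibits the role of the weight $\lambda>0$. One inclusion is immediate: if $d^*\alpha=d^{\Lambda*}\alpha=dd^\Lambda\alpha=0$, then using $(dd^\Lambda)^*=d^{\Lambda*}d^*$ together with $d^2=(d^\Lambda)^2=0$, every one of the five terms of $D_{dd^\Lambda}\alpha$ vanishes. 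For the converse, since $M$ is closed I may integrate by parts in $\langle D_{dd^\Lambda}\alpha,\alpha\rangle$ to obtain
\[
\langle D_{dd^\Lambda}\alpha,\alpha\rangle=\|dd^\Lambda\alpha\|^2+\|(dd^\Lambda)^*\alpha\|^2+\|d^\Lambda d^*\alpha\|^2+\|dd^{\Lambda*}\alpha\|^2+\lambda\big(\|d^*\alpha\|^2+\|d^{\Lambda*}\alpha\|^2\big).
\]
As $\lambda>0$, the vanishing of the left side forces $d^*\alpha=d^{\Lambda*}\alpha=0$ and $dd^\Lambda\alpha=0$, which are precisely the $dd^\Lambda$-harmonic equations. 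Assertion (1) is then immediate: $D_{dd^\Lambda}$ is elliptic on the closed manifold $M$, so its kernel is finite dimensional, and by the identity above that kernel is $\mathcal{H}^k_{dd^\Lambda}(M)$.

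For (2) I would invoke the elliptic decomposition $\Omega^k=\ker D_{dd^\Lambda}\oplus\operatorname{im}D_{dd^\Lambda}=\mathcal{H}^k_{dd^\Lambda}\oplus\operatorname{im}D_{dd^\Lambda}$, orthogonal and with closed image. It then remains to identify $\operatorname{im}D_{dd^\Lambda}$ with $V:=(dd^\Lambda)^*\Omega^k+\big(d\Omega^{k-1}+d^\Lambda\Omega^{k+1}\big)$. Reading off the five summands of (\ref{elliptic 2}), in each the outermost operator is $(dd^\Lambda)^*$, $d$, or $d^\Lambda$, so $\operatorname{im}D_{dd^\Lambda}\subseteq V$. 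Conversely, the adjoint relations show that $\alpha\perp V$ is equivalent to $dd^\Lambda\alpha=d^*\alpha=d^{\Lambda*}\alpha=0$, i.e.\ to $\alpha\in\mathcal{H}^k_{dd^\Lambda}$; hence every element of $V$ has vanishing harmonic component and lies in $\operatorname{im}D_{dd^\Lambda}$, giving $V=\operatorname{im}D_{dd^\Lambda}$. The mutual orthogonality of the two pieces of $V$ is the computation $dd^\Lambda d=dd^\Lambda d^\Lambda=0$ (a consequence of $d^2=(d^\Lambda)^2=0$ and $dd^\Lambda=-d^\Lambda d$), so the splitting is exactly the stated orthogonal direct sum.

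For (3), take $\alpha\in\ker(dd^\Lambda)\cap\Omega^k$ and decompose it by (2) as $\alpha=\alpha_{\mathcal H}+(dd^\Lambda)^*\beta+(d\gamma+d^\Lambda\delta)$. Applying $dd^\Lambda$ annihilates the harmonic term and the term $d\gamma+d^\Lambda\delta$, leaving $dd^\Lambda(dd^\Lambda)^*\beta=0$; pairing with $\beta$ gives $\|(dd^\Lambda)^*\beta\|^2=0$, hence $(dd^\Lambda)^*\beta=0$. Thus $\alpha-\alpha_{\mathcal H}\in d\Omega^{k-1}+d^\Lambda\Omega^{k+1}=(\operatorname{im}d+\operatorname{im}d^\Lambda)\cap\Omega^k$, whence $[\alpha]=[\alpha_{\mathcal H}]$ and the natural map $\mathcal{H}^k_{dd^\Lambda}\to H^k_{dd^\Lambda}(M)$ is surjective; injectivity is the same orthogonality, since a harmonic form lying in $\operatorname{im}d+\operatorname{im}d^\Lambda$ is orthogonal to itself and so vanishes.

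Because the earlier discussion already supplies ellipticity of $D_{dd^\Lambda}$ and the identity $\ker D_{dd^\Lambda}=\mathcal{H}^k_{dd^\Lambda}$, the program is essentially formal, and the one step demanding genuine care is the identification $\operatorname{im}D_{dd^\Lambda}=V$; behind it lies the principal-symbol computation that makes the elliptic decomposition available, which I regard as the true analytic heart of the matter. I would also emphasize that the algebraic relations $dd^\Lambda=-d^\Lambda d$ and $(d^\Lambda)^2=0$ persist on any almost K\"ahler manifold, so all of the formal manipulations above are insensitive to the non-integrability of $J$, and only the symbol computation is affected by the almost-complex setting.
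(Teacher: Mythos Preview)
The paper does not give its own proof of this proposition: it is quoted verbatim as \cite[Theorem~3.16]{TY} and used as a black box. Your argument is the standard Kodaira--Hodge route (elliptic self-adjoint operator $\Rightarrow$ finite kernel and $\Omega^k=\ker\oplus\operatorname{im}$, then identify the image), which is exactly the strategy Tseng--Yau employ in \cite{TY}; so there is nothing substantively different to compare, and your write-up is correct as far as the formal deductions go.

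One small point worth tightening: in your inclusion $\operatorname{im}D_{dd^\Lambda}\subseteq V$ you say each summand of (\ref{elliptic 2}) has outermost operator $(dd^\Lambda)^*$, $d$, or $d^\Lambda$. For the second summand $(dd^\Lambda)(dd^\Lambda)^*$ this is true only after unpacking $dd^\Lambda$ as $d\circ d^\Lambda$, so the outermost operator is $d$; it would be clearer to say so explicitly. Everything else---the orthogonality $\langle(dd^\Lambda)^*\beta,\,d\gamma+d^\Lambda\delta\rangle=0$ via $dd^\Lambda d=0=dd^\Lambda d^\Lambda$, and the surjectivity/injectivity in~(3)---is clean. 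As you note, the only genuinely analytic step is the ellipticity of $D_{dd^\Lambda}$, which the paper (and you) take from \cite{TY}.
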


  Using the symplectic form $\omega=\sum\frac{1}{2}\omega_{ij}dx^i\wedge dx^j$,
  the Lefschetz operator $L: \Omega^k(M)\rightarrow \Omega^{k+2}(M)$ and the dual Lefschetz operator
  $\Lambda:\Omega^k(M)\rightarrow\Omega^{k-2}(M)$ are defined by
 \begin{equation*}
 \begin{aligned}
 &L:\ L(\alpha)=\omega\wedge\alpha,\\
 &\Lambda:\ \Lambda(\alpha)=\frac{1}{2}(\omega^{-1})^{ij}i_{\partial_{x^i}}i_{\partial_{x^j}}\alpha,
 \end{aligned}
 \end{equation*}
 where $i$ denotes the interior product.

    \begin{prop}\label{isomorphic}
   ({\rm\cite[Corollary 3.8, 3.19]{TY}})
   On a closed almost K\"ahler manifold $(M,g,J,\omega)$ of dimension $2n$,
   the Lefschetz operator defines isomorphisms
   $$
   L^{n-k}:\mathcal{H}^k_{d+d^\Lambda}\cong \mathcal{H}^{2n-k}_{d+d^\Lambda}
   $$
   and
    $$
   L^{n-k}:\mathcal{H}^k_{dd^\Lambda}\cong \mathcal{H}^{2n-k}_{dd^\Lambda}
   $$
   for $k\leq n$.
    \end{prop}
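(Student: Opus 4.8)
The plan is to realize each total harmonic space $\mathcal{H}^\bullet_{d+d^\Lambda}=\bigoplus_k\mathcal{H}^k_{d+d^\Lambda}$ and $\mathcal{H}^\bullet_{dd^\Lambda}=\bigoplus_k\mathcal{H}^k_{dd^\Lambda}$ as a finite-dimensional representation of the Lefschetz $\mathfrak{sl}_2$-triple $(L,\Lambda,H)$, and then to read off the stated isomorphisms from $\mathfrak{sl}_2$-representation theory. Here $\Lambda=L^*$ (the metric being compatible) and $H$ is the grading operator acting as $(n-k)\,\mathrm{Id}$ on $\Omega^k$; these already satisfy the pointwise relations $[\Lambda,L]=H$, $[H,L]=-2L$, $[H,\Lambda]=2\Lambda$ on the full exterior algebra, since they are purely algebraic consequences of the nondegeneracy of $\omega$ and do not use integrability of $J$. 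The spaces $\mathcal{H}^k$ are finite-dimensional by Propositions \ref{Yau theorem}(1) and \ref{Yau theorem2}(1), so once $L$ and $\Lambda$ are shown to leave them invariant the total space is a genuine finite-dimensional $\mathfrak{sl}_2$-module.

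First I would assemble the commutators of $L,\Lambda$ with the four first-order operators. Since $d\omega=0$ we have $[L,d]=0$; writing $d^\Lambda=[d,\Lambda]$ (Brylinski's identity, valid on any symplectic manifold and hence independent of the integrability of $J$), the Jacobi identity together with the $\mathfrak{sl}_2$-relations gives $[L,d^\Lambda]=-d$, and conjugating $[L,d]=0$ by the symplectic star $*_s$ (which interchanges $L$ and $\Lambda$ and squares to the identity) gives $[\Lambda,d^\Lambda]=0$. Taking metric adjoints of these, using $L^*=\Lambda$, produces the dual set $[\Lambda,d^*]=0$, $[L,d^*]=d^{\Lambda*}$, $[L,d^{\Lambda*}]=0$, $[\Lambda,d^{\Lambda*}]=d^*$. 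The fussiest point is tracking the degree-dependent sign $(-1)^{k+1}$ in $d^\Lambda=(-1)^{k+1}*_s d\,*_s$ when conjugating by $*_s$, but this is routine; I expect confirming the two nontrivial commutators $[L,d^\Lambda]=-d$ and $[\Lambda,d^\Lambda]=0$ with the correct signs to be the only real obstacle, after which everything is formal.

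With these relations the invariance is a direct check. Using $\alpha\in\mathcal{H}^k_{d+d^\Lambda}\iff d\alpha=d^\Lambda\alpha=(dd^\Lambda)^*\alpha=0$ with $(dd^\Lambda)^*=d^{\Lambda*}d^*$, I would verify for $L\alpha$ that $d(L\alpha)=L\,d\alpha=0$, that $d^\Lambda(L\alpha)=L\,d^\Lambda\alpha+d\alpha=0$, and, commuting $L$ leftward through $d^{\Lambda*}d^*$ via $[L,d^*]=d^{\Lambda*}$, $[L,d^{\Lambda*}]=0$ and $(d^{\Lambda*})^2=0$, that $(dd^\Lambda)^*(L\alpha)=L\,(dd^\Lambda)^*\alpha=0$; the same relations show $\Lambda$ preserves $\mathcal{H}^k_{d+d^\Lambda}$. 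The dual computation, starting from $\alpha\in\mathcal{H}^k_{dd^\Lambda}\iff d^*\alpha=d^{\Lambda*}\alpha=dd^\Lambda\alpha=0$ (now using $dd^\Lambda L=L\,dd^\Lambda$), handles $\mathcal{H}^\bullet_{dd^\Lambda}$. Finally, a form in $\mathcal{H}^k$ is an $H$-eigenvector of weight $w=n-k$, which is $\ge 0$ for $k\le n$; invoking the standard fact that in any finite-dimensional $\mathfrak{sl}_2$-module the lowering operator restricts to an isomorphism $L^{w}$ from the weight-$w$ space onto the weight-$(-w)$ space (decompose into irreducibles), we obtain exactly $L^{n-k}\colon\mathcal{H}^k\xrightarrow{\ \sim\ }\mathcal{H}^{2n-k}$ in both families, which is the assertion.
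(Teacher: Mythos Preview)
Your proposal is correct and is essentially the approach of \cite{TY} (which the paper cites here without reproving): the key input there is that $L$ and $\Lambda$ commute with $\Delta_{d+d^\Lambda}$ and $\Delta_{dd^\Lambda}$ (\cite[Lemma~3.7]{TY}), which you have simply unpacked into the first-order commutators and checked against the three harmonic conditions directly, before invoking the finite-dimensional $\mathfrak{sl}_2$-representation theory. The paper uses precisely this commutation in its own proof of the $L^2$ analogue (Proposition~\ref{Hodge decoL2}) and then appeals to the same $\mathfrak{sl}_2$-isomorphism, so your argument matches both the source and the paper's treatment of the noncompact case.
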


   \vskip 6pt

  The compactness becomes important when one integrates by parts.
  For example, by applying the Stokes formula
  \begin{equation}\label{Stokes}
    \int_Md(\varphi\wedge*_g\psi)=0,
  \end{equation}
  we can derive the desired relation
  $<d\varphi,\psi>_g=<\varphi,d^*\psi>_g$.
  If $M$ is noncompact, then (\ref{Stokes}) is not true generally.
  Fortunately, Gromov has proven that (\ref{Stokes}) remains true for all $L^1$-forms on a complete manifold.
  \begin{lem}\label{Gromov lemma}
  ({\rm\cite[Lemma 1.1.A]{Gromov}})
  Suppose $M$ is a complete $m$-manifold.
  Let $\alpha$ be an $L^1$-form on $M$ of degree $m-1$ such that the differential $d\alpha$ is also $L^1$.
  Then
  $$
  \int_Md\alpha=0.
  $$
   \end{lem}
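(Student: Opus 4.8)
The plan is to prove the lemma by the classical cutoff-and-limit argument, reducing the noncompact situation to the compactly supported case where ordinary Stokes' theorem applies. First I would fix a base point $x_0\in M$ and work with the distance function $\rho(x)=\rho(x_0,x)$. Completeness is what makes this function useful: by the Hopf--Rinow theorem every closed metric ball $\overline{B_R}=\{\rho\le R\}$ is compact and $\rho$ is proper, while the triangle inequality makes $\rho$ globally $1$-Lipschitz, so it is differentiable almost everywhere with $\|\nabla\rho\|_g\le 1$. Choosing a smooth $\phi:[0,\infty)\to[0,1]$ with $\phi\equiv 1$ on $[0,1]$, $\phi\equiv 0$ on $[2,\infty)$ and $|\phi'|\le 2$, I set $\chi_R=\phi(\rho/R)$. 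Each $\chi_R$ equals $1$ on $B_R$, is supported in the compact set $\overline{B_{2R}}$, and satisfies the crucial gradient bound $\|\nabla\chi_R\|_g\le C/R$ with $C=2$; the factor $1/R$ is precisely the feature that will make the boundary term negligible in the limit.

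Next I would apply Stokes' theorem to the compactly supported form $\chi_R\,\alpha$. Expanding $d(\chi_R\alpha)=\chi_R\,d\alpha+d\chi_R\wedge\alpha$ and integrating yields the identity
\[
\int_M \chi_R\, d\alpha \;=\; -\int_M d\chi_R\wedge\alpha .
\]
It then remains to let $R\to\infty$ on both sides. For the left-hand side, $\chi_R\to 1$ pointwise and $\|\chi_R\,d\alpha\|_g\le\|d\alpha\|_g\in L^1$, so the dominated convergence theorem gives $\int_M\chi_R\,d\alpha\to\int_M d\alpha$, using the hypothesis that $d\alpha$ is $L^1$. For the right-hand side, the gradient bound and the hypothesis $\alpha\in L^1$ yield
\[
\Big|\int_M d\chi_R\wedge\alpha\Big|
\;\le\; \frac{C}{R}\int_{B_{2R}\setminus B_R}\|\alpha\|_g\,d\mathrm{vol}_g
\;\le\; \frac{C}{R}\,\|\alpha\|_{L^1(M)}
\;\longrightarrow\;0 .
\]
Combining the two limits forces $\int_M d\alpha=0$, as claimed.

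The point I expect to be the main technical obstacle is not the limiting argument but the regularity of the cutoff: since the distance function is only Lipschitz, $\chi_R$ is Lipschitz rather than smooth, and one must justify that $\int_M d(\chi_R\alpha)=0$ genuinely holds. I would resolve this in one of two standard ways. Either I invoke the fact that Stokes' theorem remains valid for compactly supported Lipschitz forms, with the exterior derivative interpreted distributionally and agreeing almost everywhere with the classical formula; or, to stay entirely within the smooth category, I first mollify $\rho$ into a smooth proper exhaustion function $r$ comparable to $\rho$ and with $\|\nabla r\|_g\le 2$, and then build $\chi_R=\phi(r/R)$, which is honestly smooth and carries the same bounds. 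With either remedy the computation above goes through unchanged.

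Finally, I would stress that completeness is indispensable and enters in two places: it guarantees that the cutoffs have compact support, so that the elementary Stokes formula is legitimately applicable, and it furnishes the globally defined proper distance function from which the $1/R$ gradient decay is manufactured. Dropping completeness invalidates both steps, and indeed the conclusion itself can fail, so no genuine shortcut around this hypothesis is available.
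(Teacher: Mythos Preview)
Your proof is correct and is the classical Gaffney cutoff argument. Note, however, that the paper does not actually prove this lemma: it is merely quoted from Gromov's paper \cite[Lemma~1.1.A]{Gromov} and used as a black box, so there is no ``paper's own proof'' to compare against. What you have written is essentially the proof Gromov himself gives.

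One small remark: the $1/R$ decay of $\|\nabla\chi_R\|_g$ is pleasant but not strictly necessary here. A uniform bound $\|\nabla\chi_R\|_g\le C$ already suffices, since $\alpha\in L^1$ forces $\int_{M\setminus B_R}\|\alpha\|_g\,d\mathrm{vol}_g\to 0$, and $d\chi_R$ is supported in $B_{2R}\setminus B_R$. The $1/R$ factor becomes genuinely important only in situations (such as later in the paper, or in Gromov's $L^2$ applications) where one pairs the gradient against a form that is merely $L^2$ rather than $L^1$.
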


   \begin{rem}
   The above relation for $C^\infty$ forms easily yields the statement for nonsmooth $\eta$ where $d\eta$ is understood as a distribution
    (cf. {\rm \cite{Gromov,L¨¹ck}}).
   \end{rem}

  Let $\Delta_d=dd^*+d^*d$ be the de Rham Laplacian.
  Denote the space of $\Delta_d$-harmonic $k$-forms by $\mathcal{H}^k_d(M)$.
  In \cite{Gromov}, Gromov has gotten
  \begin{equation}\label{Hodge deco}
     L^2\Omega^k=L^2\mathcal{H}^k_d\oplus
     [\overline{d(L^2\Omega^{k-1})}\oplus\overline{d^*(L^2\Omega^{k+1})}].
  \end{equation}
  Along the lines used by Gromov(see also \cite{L¨¹ck}), we want to obtain another two decompositions.
   \begin{lem}\label{our lemma}
   Suppose that $(M,g,J,\omega)$ is a complete noncompact almost K\"ahler manifold with bounded geometry.
   Then we can get:\\
   (1) $<d\alpha,\beta>_g=<\alpha,d^*\beta>_g$, for $\alpha,\beta,d\alpha,d^*\beta\in L^2\Omega^*(M)$,\\
   (2) $<d^\Lambda\alpha,\beta>_g=<\alpha,d^{\Lambda*}\beta>_g$, for $\alpha,\beta,d^\Lambda\alpha,d^{\Lambda*}\beta\in L^2\Omega^*(M)$,\\
   (3) $<dd^\Lambda\alpha,\beta>_g=<\alpha,(dd^{\Lambda})^*\beta>_g$,
   for $\alpha,\beta,d^\Lambda\alpha,d^*\beta,dd^\Lambda\alpha,(dd^{\Lambda})^*\beta\in L^2\Omega^*(M)$.
   \end{lem}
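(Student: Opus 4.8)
The plan is to reduce all three statements to Gromov's Stokes-type lemma (Lemma \ref{Gromov lemma}), exactly as Gromov did for the de Rham operators. First I would handle (1). Given $\alpha, \beta, d\alpha, d^*\beta \in L^2\Omega^*(M)$ with $\alpha$ a $(k-1)$-form and $\beta$ a $k$-form, I consider the $(2n-1)$-form $\eta = \alpha \wedge *_g\beta$. Then $d\eta = d\alpha \wedge *_g\beta + (-1)^{k-1}\alpha \wedge d*_g\beta = \langle d\alpha,\beta\rangle_g\, d\mathrm{vol}_g - \langle \alpha, d^*\beta\rangle_g\, d\mathrm{vol}_g$ pointwise, using $d^* = -*_g d *_g$ on $k$-forms and the defining property of $*_g$. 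By Cauchy--Schwarz, $\eta \in L^1$ because $\alpha,\beta \in L^2$, and $d\eta \in L^1$ because $d\alpha, d^*\beta \in L^2$ and $\alpha,\beta\in L^2$. Hence Lemma \ref{Gromov lemma} gives $\int_M d\eta = 0$, which is precisely $\langle d\alpha,\beta\rangle_g = \langle \alpha, d^*\beta\rangle_g$. One should be slightly careful that $d\eta$ computed as a distribution agrees with the smooth expression; this is covered by the remark following Lemma \ref{Gromov lemma}, since here everything is $C^\infty$ anyway.

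For (2), I would run the same argument with the symplectic star in place of the Hodge star where it occurs in the definition of $d^\Lambda$, but keeping the Riemannian pairing. Recall $d^\Lambda = (-1)^{k+1}*_s d *_s$ and $d^{\Lambda*} = *_g d^\Lambda *_g$. The cleanest route is to use the known pointwise relation between $*_s$ and $*_g$ on an almost K\"ahler manifold (both factor through the $J$-action), so that $d^{\Lambda*}$ can be written in a form making the pairing $\langle d^\Lambda\alpha,\beta\rangle_g - \langle\alpha, d^{\Lambda*}\beta\rangle_g$ equal to $\int_M$ of an exact form $d\zeta$ with $\zeta$ built bilinearly and algebraically from $\alpha$ and $\beta$ (hence $\zeta \in L^1$ whenever $\alpha,\beta\in L^2$) and $d\zeta \in L^1$ whenever $d^\Lambda\alpha, d^{\Lambda*}\beta \in L^2$. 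Then Lemma \ref{Gromov lemma} again closes the argument. Alternatively, and perhaps more transparently, one can avoid $*_s$ entirely by using the identity $d^\Lambda = [d,\Lambda]$ (or $d^\Lambda = d\Lambda - \Lambda d$) together with the fact that $L$ and $\Lambda$ are pointwise adjoint algebraic operators with respect to $g$; then (2) follows by combining (1) with the purely pointwise adjointness of $\Lambda$ and $L$, requiring only $\alpha,\beta,d\alpha, d\beta$ (equivalently $d^\Lambda\alpha, d^{\Lambda*}\beta$) to control the relevant $L^2$ norms.

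For (3), I would first write $(dd^\Lambda)^* = (d^\Lambda)^* d^* = d^{\Lambda*} d^*$ and then apply (1) and (2) in succession: $\langle dd^\Lambda\alpha,\beta\rangle_g = \langle d^\Lambda\alpha, d^*\beta\rangle_g = \langle \alpha, d^{\Lambda*}d^*\beta\rangle_g = \langle\alpha, (dd^\Lambda)^*\beta\rangle_g$. The first step applies (1) with $d^\Lambda\alpha$ in the role of ``$\alpha$'' — which is why the hypothesis list for (3) includes $d^\Lambda\alpha \in L^2$ and $d^*\beta \in L^2$ in addition to $dd^\Lambda\alpha, (dd^\Lambda)^*\beta \in L^2$ — and the second step applies (2) with $d^*\beta$ in the role of ``$\beta$''. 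The only genuine subtlety, and the step I expect to need the most care, is bookkeeping the $L^2$ hypotheses at the intermediate stage: I must check that the intermediate forms $d^\Lambda\alpha$ and $d^*\beta$, and their images $dd^\Lambda\alpha$ and $d^{\Lambda*}d^*\beta$, all lie in $L^2$ given exactly the listed hypotheses, so that each invocation of (1)/(2) is legitimate. This is precisely why the statement's hypothesis list for (3) is longer than a naive reading would suggest. Once that is in place, no further analysis is needed; everything else is the elementary Leibniz-rule computation plus one appeal to completeness via Lemma \ref{Gromov lemma}.
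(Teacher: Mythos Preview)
Your proposal is correct and follows essentially the same approach as the paper: for (1) both reduce to Gromov's lemma via $\eta=\alpha\wedge *_g\beta$; for (2) the paper carries out precisely the direct $*_s$ computation you outline first, with the explicit primitive $\zeta=*_s\alpha\wedge *_s*_g\beta$ and a sign chase; and for (3) the paper likewise just composes (1) and (2). One caution on your alternative route for (2) via $d^\Lambda=[d,\Lambda]$: invoking (1) on $d\Lambda\alpha$ would require $d\alpha\in L^2$ (to control $d(\Lambda\alpha)=d^\Lambda\alpha+\Lambda d\alpha$), which is not among the stated hypotheses, so the direct $*_s$ argument is indeed the correct choice here.
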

   \begin{proof}
   (1) By observing the formula
    $$d\alpha\wedge*\beta-\alpha\wedge*d^*\beta=\pm d(\alpha\wedge*\beta),$$
   and note that both $\alpha\wedge*\beta$ and $d(\alpha\wedge*\beta)$ are $L^1$-forms,
   we can easily get
    $$<d\alpha,\beta>_g=<\alpha,d^*\beta>_g$$
   by applying the Lemma \ref{Gromov lemma}.

   (2) Suppose that $\alpha$ is a $k$-form and $\beta$ is a  $k-1$-form.
    \begin{eqnarray*}
    (d^\Lambda\alpha,\beta)_gdvol_g&=& (-1)^{k+1}*_sd*_s\alpha\wedge*_g\beta \\
     &=& (-1)^{(k-1)^2}d*_s\alpha\wedge*_s*_g\beta \\
     &=& (-1)^{(k-1)^2}[d(*_s\alpha\wedge*_s*_g\beta)+(-1)^{k-1}*_s\alpha\wedge d*_s*_g\beta]
   \end{eqnarray*}
   By the assumption of conditions,
   we find that both $*_s\alpha\wedge*_s*_g\beta$ and $d(*_s\alpha\wedge*_s*_g\beta)$ are $L^1$-forms.
   Taking integral of both sides of the above equation,
   we obtain
   \begin{eqnarray*}
    <d^\Lambda\alpha,\beta>_g &=& \int_M(-1)^{(k-1)^2}d(*_s\alpha\wedge*_s*_g\beta)+\int_M(-1)^{k^2-k}*_s\alpha\wedge d*_s*_g\beta \\
    &=& (-1)^{k^2-k}\int_M*_s\alpha\wedge d*_s*_g\beta\\
     &=&\int_M\alpha\wedge *_sd*_s*_g\beta\\
     &=&\int_M\alpha\wedge (-1)^kd^{\Lambda}*_g\beta\\
     &=&<\alpha,(-1)^{k^2+k}*_gd^{\Lambda}*_g\beta>_g\\
     &=&<\alpha,d^{\Lambda*}\beta>_g.
     \end{eqnarray*}

     (3) The third conclusion is an obvious result following (1) and (2).
   \end{proof}
   With Lemma \ref{our lemma}, we can obtain another very useful lemma.
   Before giving the useful lemma, we claim that there exists a family of cutoff functions $a_\varepsilon$
   such that
   $$
   a_\varepsilon\geq0,\,\,\,\|\nabla^1_g a_\varepsilon\|_g\leq\varepsilon (a_\varepsilon)^{\frac{1}{2}},
   \,\,\, \|(\nabla^1_g)^2 a_\varepsilon\|_g\leq\varepsilon^2
  $$
  and the subsets $a^{-1}_\varepsilon(1)\subset M$ exhaust $M$ as $\varepsilon\rightarrow 0$ on complete noncompact manifold $M$,
  where $\nabla^1_g$ is the second canonical connection with respect to the metric $g$ and almost complex structure $J$ on $M$ (cf. \cite{Cha}),
  that is, $\nabla^1_g g=0=\nabla^1_g J$, hence $\nabla^1_g \omega=\nabla^1_g g(J\cdot,\cdot)=0$.
   Here, we only give the case on $\mathbb{R}$.
  Let
  \begin{equation}
  f(x)=\left\{
    \begin{array}{ll}
    \exp(\frac{-1}{x}), & ~ x>0\\
      &   \\
    0, & ~ x\leq 0.
   \end{array}
  \right.
  \end{equation}
  Define
  $$
  \psi(x)=\frac{f(x)}{f(x)+f(1-x)}.
  $$
  Note that

  $\bullet$ $0\leq \psi(x)\leq 1$ for $0<x<1$,

  $\bullet$ if $x\leq 0$, $\psi(x)=0$ and if $x\geq 1$, $\psi(x)=1$,

  $\bullet$ $\psi$, $\psi'$ and $\psi''$ are all bounded.\\
  Finally for $x\geq0$, let
  $$
  a_\varepsilon=\psi^2(2-\varepsilon x).
  $$
  Clearly, $a_\varepsilon(x)=1$ on $[0,\frac{1}{\varepsilon}]$ and $a_\varepsilon(x)=0$ on $[\frac{2}{\varepsilon},\infty)$.
  For $\frac{1}{\varepsilon}<x<\frac{2}{\varepsilon}$, we have
  $$
  a'_\varepsilon(x)=-2\varepsilon\psi(2-\varepsilon x)\psi'(2-\varepsilon x).
  $$
  Since $\psi'$ is bounded, we see that $-\varepsilon C_1\sqrt{a_\varepsilon}\leq a'_\varepsilon(x)\leq 0$ for some constant $C_1$.
  Moreover,
  $$
   a''_\varepsilon(x)=2\varepsilon^2\psi'^2(2-\varepsilon x)+2\varepsilon^2\psi(2-\varepsilon x)\psi''(2-\varepsilon x).
  $$
  Since $\psi$, $\psi'$ and $\psi''$ are bounded, we have
  $|a''_\varepsilon(x)|\leq C_2\varepsilon^2$ for some constant $C_2$.

    \vskip 12pt

   Let $(M,g,J,\omega)$ be a $2n$-dimensional complete, noncompact almost K\"ahler manifold with bounded geometry.
   Then $L^2_l\Omega^k(M)$, $0\leq k\leq 2n$, is completion of $\Omega^k_c(M)$ for any non-negative integer $l$.
   $\Delta_{d+d^{\Lambda}}$ and $D_{d+d^{\Lambda}}$ are two formal self-adjoint fourth-order differential operator,
   that is,
   \begin{eqnarray}
    <\Delta_{d+d^{\Lambda}}u,v>  &=&   <u,\Delta_{d+d^{\Lambda}}v>,  \nonumber\\
      <D_{d+d^{\Lambda}}u,v>  &=&   <u,D_{d+d^{\Lambda}}v> ,
   \end{eqnarray}
   for $u,v\in\Omega^k_c(M)$.
   If  $D_{d+d^{\Lambda}}$ is essentially self-adjoint elliptic operator, then for $u\in L^2\Omega^k(M)$,
    $D_{d+d^{\Lambda}}u=0$ (as a distribution) implies that $u\in L^2_4\Omega^k(M)$ (cf. \cite{Ati,L¨¹ck,Shubin}).
    In fact, $D_{d+d^{\Lambda}}:\Omega^k_c(M)\rightarrow\Omega^k_c(M)$ is an elliptic operator of fourth-order, $0\leq k\leq 2n$,
    that is,
    \begin{equation}
       <D_{d+d^{\Lambda}}u,v> = <u,D_{d+d^{\Lambda}}v>, \,\,\,u,v\in\Omega^k_c(M).
    \end{equation}
    When fourth-order elliptic operator $D_{d+d^{\Lambda}}$ is essentially self-adjoint in $L^2\Omega^k(M)$,
    then its closure is a self-adjoint operator in $L^2\Omega^k(M)$
    with the domain
  $$dom(D_{d+d^{\Lambda},min})=dom(D_{d+d^{\Lambda},max})=L^2_4\Omega^k(M),$$
   hence,
    for any $u\in L^2\Omega^k(M)$,
    $D_{d+d^{\Lambda}}u=0$ (in the sense of distribution) implies that $u\in L^2_4\Omega^k(M)$
   (cf. M. Shubin \cite[Theorem on page 18-6]{Shubin} or W. L\"{u}ck \cite[Lemma 1.75]{L¨¹ck}).
    Recall that
    $$
  \Delta_{d+d^{\Lambda}}=dd^{\Lambda}(dd^{\Lambda})^*+\lambda(d^*d+d^{\Lambda*}d^{\Lambda}),\,\lambda>0
  $$
  and
  \begin{eqnarray*}
     D_{d+d^{\Lambda}}&=&  dd^{\Lambda}(dd^{\Lambda})^*+(dd^{\Lambda})^*(dd^{\Lambda})+d^*d^{\Lambda}d^{\Lambda*}d  \\
     && +d^{\Lambda*}dd^*d^{\Lambda}+\lambda(d^*d+d^{\Lambda*}d^{\Lambda}),\,\lambda>0,
  \end{eqnarray*}
 it is easy to get that $D_{d+d^{\Lambda}}$ and $\Delta_{d+d^{\Lambda}}$ have the same kernel (cf. \cite{Ati,Shubin}).
    For  $D_{dd^{\Lambda}}$ and $\Delta_{dd^{\Lambda}}$, we have the similar results.

  With the Gaffney cutoff trick, we get the following lemma.
    \begin{lem}\label{key lemma}
   Suppose that $(M,g,J,\omega)$ is a $2n$-dimensional complete, noncompact almost K\"ahler manifold with bounded geometry.
   If $D_{d+d^{\Lambda}}$ is of essential self-adjointness, and
   an $L^2$ $k$-form $\alpha$, $0\leq k\leq 2n$, is $d+d^\Lambda$-harmonic, i.e., $\Delta_{d+d^{\Lambda}}\alpha=0$,
   then $\alpha$ satisfies $d\alpha=d^{\Lambda}\alpha=0 \,\,\, and \,\,\, (dd^{\Lambda})^*\alpha=0$.
    \end{lem}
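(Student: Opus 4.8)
The plan is a Gaffney--type integration-by-parts argument using the family of cutoff functions $a_\varepsilon$ constructed above. First I would observe that the implication ``$d\alpha=d^\Lambda\alpha=(dd^\Lambda)^*\alpha=0\Rightarrow\Delta_{d+d^\Lambda}\alpha=0$'' is immediate from the definition of $\Delta_{d+d^\Lambda}$, so the content is the converse. Also, by (\ref{elliptic}) and the identity $\ker\Delta_{d+d^\Lambda}=\ker D_{d+d^\Lambda}$ together with the ellipticity of $D_{d+d^\Lambda}$, the form $\alpha$ is smooth, which legitimizes the pointwise manipulations below. The guiding idea is to reproduce, against $a_\varepsilon$, the computation that on a closed manifold yields $\langle\Delta_{d+d^\Lambda}\alpha,\alpha\rangle_g=\|(dd^\Lambda)^*\alpha\|^2+\lambda\|d\alpha\|^2+\lambda\|d^\Lambda\alpha\|^2$.

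Concretely, I would expand $0=\langle\Delta_{d+d^\Lambda}\alpha,a_\varepsilon\alpha\rangle_g$. Since $a_\varepsilon\alpha$ is compactly supported, integration by parts is classical and introduces no boundary term, so one gets
\[
0=\|\sqrt{a_\varepsilon}\,(dd^\Lambda)^*\alpha\|^2+\lambda\|\sqrt{a_\varepsilon}\,d\alpha\|^2+\lambda\|\sqrt{a_\varepsilon}\,d^\Lambda\alpha\|^2+E_\varepsilon ,
\]
where $E_\varepsilon$ gathers the commutator terms $\langle(dd^\Lambda)^*\alpha,[(dd^\Lambda)^*,a_\varepsilon]\alpha\rangle$, $\lambda\langle d\alpha,da_\varepsilon\wedge\alpha\rangle$ and $\lambda\langle d^\Lambda\alpha,[d^\Lambda,a_\varepsilon]\alpha\rangle$. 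The pieces coming from the first-order operators $d$ and $d^\Lambda$ are harmless: $da_\varepsilon\wedge\cdot$ and $[d^\Lambda,a_\varepsilon]$ are zeroth order in $\alpha$ and pointwise $\le C|\nabla a_\varepsilon|_g\,|\alpha|\le C\varepsilon\,a_\varepsilon^{1/2}|\alpha|$, so Cauchy--Schwarz and $\|\alpha\|_{L^2}<\infty$ bound the corresponding part of $E_\varepsilon$ by $C\varepsilon\big(\|\sqrt{a_\varepsilon}\,d\alpha\|+\|\sqrt{a_\varepsilon}\,d^\Lambda\alpha\|\big)\|\alpha\|_{L^2}$, which is absorbed into the left-hand side for $\varepsilon$ small. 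Letting $\varepsilon\to0$ and using that $a_\varepsilon^{-1}(1)$ exhausts $M$, monotone convergence then forces $\|(dd^\Lambda)^*\alpha\|^2+\lambda\|d\alpha\|^2+\lambda\|d^\Lambda\alpha\|^2=0$, which is the assertion.

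The one genuinely delicate point — and the reason the cutoff was built with the extra bound $\|\nabla^2 a_\varepsilon\|_g\le\varepsilon^2$, which plays no role in the classical second-order Gaffney lemma — is the commutator $[(dd^\Lambda)^*,a_\varepsilon]\alpha$: because $(dd^\Lambda)^*=d^{\Lambda*}d^*$ is a \emph{second}-order operator, its Leibniz expansion produces, besides terms $\sim|\nabla a_\varepsilon|\,(|d^*\alpha|+|\alpha|)$, also a term $\sim|\nabla^2 a_\varepsilon|\,|\alpha|$ and a term $\sim|\nabla a_\varepsilon|\,|\nabla\alpha|$. The first is controlled directly by $\varepsilon^2\|\alpha\|_{L^2}^2$; for the last I would integrate by parts once more to move the derivative off $\alpha$, trading it for a $\nabla^2 a_\varepsilon$ factor plus a higher-order term that is re-expressed using $\Delta_{d+d^\Lambda}\alpha=0$ — alternatively, one first checks, via interior elliptic estimates for $D_{d+d^\Lambda}$ (or a preliminary run of the same cutoff estimate), that $\nabla\alpha$, $d^*\alpha$ and $(dd^\Lambda)^*\alpha$ are square-integrable over the shells $\operatorname{supp}\nabla a_\varepsilon$ with tails tending to $0$. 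The main obstacle is thus purely the bookkeeping: enumerating all these lower-order error terms and verifying each is $O(\varepsilon)+O(\varepsilon^2)$ times something bounded or absorbable; once that is settled, the limit $\varepsilon\to0$ closes the argument.
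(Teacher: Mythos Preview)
Your approach is essentially the paper's: pair $\Delta_{d+d^\Lambda}\alpha$ against $a_\varepsilon\alpha$, split the result into the main term $I_1(\varepsilon)=\int_M a_\varepsilon\big(\|(dd^\Lambda)^*\alpha\|^2+\lambda\|d\alpha\|^2+\lambda\|d^\Lambda\alpha\|^2\big)$ plus commutator errors $I_2(\varepsilon)$, bound $|I_2|$ by a small multiple of $\|\alpha\|_{L^2}\,I_1(\varepsilon)^{1/2}$ via Cauchy--Schwarz and the cutoff bounds, use $I_1=-I_2$ to close, and let $\varepsilon\to0$. The paper states its pointwise bound on $I_2$ in one line without dissecting the second-order commutator, while you are more scrupulous in flagging the $|\nabla a_\varepsilon|\,|\nabla\alpha|$ contribution from $[(dd^\Lambda)^*,a_\varepsilon]$ and sketching how to absorb it; this is exactly why the extra hypothesis $\|\nabla^2 a_\varepsilon\|\le\varepsilon^2$ is built into the cutoff.
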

    To prove the above lemma, we need point-wise estimate for $d\alpha$, $\alpha\in \Omega^*(M)$.
    Suppose that $(M,g,J,\omega)$ is a $2n$-dimensional complete, noncompact almost K\"ahler manifold with bounded geometry.
    For $p\in M$, choose a local unitary frame $\{e_1,\cdot\cdot\cdot, e_n\}$ for $T^{1,0}M$ near $p$ with respect to the almost Hermitian inner
    product induced from $g$, and let $\{\theta^1,\cdot\cdot\cdot, \theta^n\}$ be a dual coframe.
    Let $\nabla^1_g$ be the second canonical connection with respect to the metric $g$.
   Locally there exists a matrix of complex valued $1$-forms $\{\theta^i_j\}$, called the connection $1$-forms, such that
    $$
    \nabla^1_ge_i=\theta^j_ie_j,\,\,\,\theta^j_i(p)=0.
    $$
    Hence
    \begin{equation}\label{}
      \nabla^1_ge_i|_p=0,\,\,\,\nabla^1_g\theta^i|_p=0, \,\,\,1\leq i\leq n.
    \end{equation}
    It is easy to see that $\{\theta^j_i\}$ satisfies the skew-Hermitian property $\theta^j_i+\overline{\theta^i_j}=0$.
    Now define the torsion $\Theta=(\Theta^1,\cdot\cdot\cdot,\Theta^n)$ of $\nabla^1_g$ by
    \begin{equation}\label{str equ}
      d\theta^i=-\theta^i_j\wedge \theta^j+\Theta^i,
    \end{equation}
    for $i=1,\cdot\cdot\cdot,n$.
    Notice that the $\Theta^i$ are $2$-forms.
    Equation (\ref{str equ}) is known as the first structure equation.
    Define the curvature $\Psi=\{\Psi^i_j\}$ of $\nabla^1_g$ by
    \begin{equation}\label{str equ1}
     d\theta^i_j=-\theta^i_k\wedge\theta^k_j+\Psi^i_j.
    \end{equation}
    Note that $\{\Psi^i_j\}$ is a skew-Hermitian matrix of $2$-froms.
     Equation (\ref{str equ1}) is known as the second structure equation.
     Since $d\omega=0$,
     \begin{equation}\label{}
       \Theta^i=N^i_{\bar{l}\bar{m}}\overline{\theta^l}\wedge\overline{\theta^m},
     \end{equation}
     where $N^i_{\bar{l}\bar{m}}$ is the Nijenhuis tensor which is defined as
     $$
     N_J(X,Y)=[JX,JY]-J[JX,Y]-J[X,JY]-[X,Y],\,\,\,X,Y\in TM.
     $$

     Now, let us estimate $d\alpha$.
     For simplicity, $\alpha\in \Omega^{1,1}(M)$ is locally written as $\alpha=\alpha_{i\bar{j}}\theta^i\wedge\overline{\theta^j}$.
     \begin{eqnarray*}
       d\alpha &=& d(\alpha_{i\bar{j}}\theta^i\wedge\overline{\theta^j}) \\
        &=& (\nabla^1_g\alpha_{i\bar{j}})\wedge\theta^i\wedge\overline{\theta^j}
        +\alpha_{i\bar{j}}(d\theta^i)\wedge\overline{\theta^j}-\alpha_{i\bar{j}}\theta^i\wedge(d\overline{\theta^j}) \\
        &=& (\nabla^1_g\alpha_{i\bar{j}})\wedge\theta^i\wedge\overline{\theta^j}
        +\alpha_{i\bar{j}}N^i_{\bar{l}\bar{m}}\overline{\theta^j}\wedge\overline{\theta^l}\wedge\overline{\theta^m}
        -\alpha_{i\bar{j}}\overline{N^i_{\bar{l}\bar{m}}}\theta^j\wedge\theta^l\wedge\theta^m.
     \end{eqnarray*}
     Since $(M,g,\omega)$ is of bounded geometry, by the definition of $N^i_{\bar{l}\bar{m}}$ (cf. \cite{MS})
     (notice that $N_J(X,Y)=2J(\nabla_XJ)Y-2J(\nabla_YJ)X$, $X,Y\in TM$, where $\nabla$ is the Levi-Civita connection induced from the metric $g$),
     $N^i_{\bar{l}\bar{m}}$ is uniformly bounded on $M$.
     Hence,
     \begin{equation}\label{estimate}
       \|d\alpha\|_g\leq C(J,\omega)( \|\nabla^1_g\alpha\|_g+\|\alpha\|_g),
     \end{equation}
   where $\|\alpha\|^2_g=(\alpha,\alpha)_g$.

  {\bf Proof of Lemma \ref{key lemma}}
  We want again to justify the integral identity
  $$
  <\Delta_{d+d^{\Lambda}}\alpha,\alpha>_g=<(dd^{\Lambda})^*\alpha,(dd^{\Lambda})^*\alpha>_g+\lambda<d\alpha,d\alpha>_g+\lambda<d^{\Lambda}\alpha,d^{\Lambda}\alpha>_g.
  $$
  If $d\alpha$, $d^*\alpha$, $d^\Lambda\alpha$, $d^{\Lambda*}\alpha$, $dd^\Lambda\alpha$ and $(dd^\Lambda)^*\alpha$ are all $L^2$ forms,
  then the above equation follows by Lemma \ref{our lemma}.

  To handle the general case, we will use the Gaffney cutoff trick.
  Let $\alpha\in\ker\Delta_{d+d^{\Lambda}}=\ker D_{d+d^{\Lambda}}$.
  We cutoff $\alpha$ and obtain by a simple computation
   $0= <\Delta_{d+d^{\Lambda}}\alpha,a_\varepsilon\alpha>_g=I_1(\varepsilon)+I_2(\varepsilon)$,
   where
   $$
   I_1(\varepsilon)=\int_Ma_\varepsilon(\lambda\|d\alpha\|^2_g+\lambda\|d^\Lambda\alpha\|^2_g+\|(dd^\Lambda)^*\alpha\|^2_g)
   $$
   and
   \begin{eqnarray*}
  |I_2(\varepsilon)|&\leq & C_1\int_M\|\nabla^1_g a_\varepsilon\|_g\cdot\|\alpha\|_g\cdot
   (\|(dd^\Lambda)^*\alpha\|_g+\lambda\|d\alpha\|_g+\lambda\|d^\Lambda\alpha\|_g) \\
      && + C_2\int_M\|(dd^\Lambda)^*\alpha\|_g\cdot\|\nabla^1_g a_\varepsilon\|_g\cdot\|\nabla^1_g \alpha\|_g  \\
      && + C_3\int_M\|(dd^\Lambda)^*\alpha\|_g\cdot\|(\nabla^1_g)^2 a_\varepsilon\|_g\cdot\|\alpha\|_g
   \end{eqnarray*}
   where $\|d\alpha\|^2_g=(d\alpha,d\alpha)_g$ and $C_1,C_2,C_3$ are some positive constants.
   Indeed, since on $k$-forms, $d^\Lambda=(-1)^{k+1}*_sd*_s$ and $(dd^\Lambda)^*=(-1)^{k+1}*_gdd^\Lambda*_g$,
   we have
  \begin{eqnarray*}
      <\Delta_{d+d^{\Lambda}}\alpha,a_\varepsilon\alpha>_g &=& <(dd^{\Lambda})^*\alpha,(dd^{\Lambda})^*a_\varepsilon\alpha>_g\\
      &&+\lambda<d\alpha,da_\varepsilon\alpha>_g+\lambda<d^{\Lambda}\alpha,d^{\Lambda}a_\varepsilon\alpha>_g  \\
      &=& <(dd^{\Lambda})^*\alpha,(-1)^{k+1}*_gdd^{\Lambda}*_ga_\varepsilon\alpha>_g\\
      &&+\lambda<d^{\Lambda}\alpha,(-1)^{k+1}*_sd*_sa_\varepsilon\alpha>_g \\
      &&+\lambda<d\alpha,da_\varepsilon\wedge\alpha>_g+\lambda<d\alpha,a_\varepsilon d\alpha>_g   \\
       &=& <(dd^{\Lambda})^*\alpha,*_gd*_sd*_s*_ga_\varepsilon\alpha>_g  \\
       &&+\lambda<d^{\Lambda}\alpha,(-1)^{k+1}*_sda_\varepsilon\wedge*_s\alpha>_g \\
         &&+\lambda<d^{\Lambda}\alpha,(-1)^{k+1}a_\varepsilon*_sd*_s\alpha>_g \\
       &&  +\lambda<d\alpha,da_\varepsilon\wedge\alpha>_g+\lambda<d\alpha,a_\varepsilon d\alpha>_g   \\
       &=& <(dd^{\Lambda})^*\alpha,a_\varepsilon*_gd*_sd*_s*_g\alpha>_g  \\
       &&+<(dd^{\Lambda})^*\alpha,*_gd a_\varepsilon\wedge*_sd*_s*_g\alpha>_g  \\
      &&+ <(dd^{\Lambda})^*\alpha,*_gd*_sda_\varepsilon\wedge*_s*_g\alpha>_g  \\
       &&+\lambda<d^{\Lambda}\alpha,(-1)^{k+1}*_sda_\varepsilon\wedge*_s\alpha>_g \\
         &&+\lambda<d^{\Lambda}\alpha,(-1)^{k+1}a_\varepsilon*_sd*_s\alpha>_g \\
       &&  +\lambda<d\alpha,da_\varepsilon\wedge\alpha>_g+\lambda<d\alpha,a_\varepsilon d\alpha>_g.
   \end{eqnarray*}
   Let
   \begin{eqnarray*}
    I_2(\varepsilon)&=&+<(dd^{\Lambda})^*\alpha,*_gd a_\varepsilon\wedge*_sd*_s*_g\alpha>_g  \\
      &&+ <(dd^{\Lambda})^*\alpha,*_gd*_sda_\varepsilon\wedge*_s*_g\alpha>_g  \\
       &&+\lambda<d^{\Lambda}\alpha,(-1)^{k+1}*_sda_\varepsilon\wedge*_s\alpha>_g \\
       &&  +\lambda<d\alpha,da_\varepsilon\wedge\alpha>_g.
   \end{eqnarray*}
   Then
   \begin{eqnarray*}
    I_1(\varepsilon)&=&<(dd^{\Lambda})^*\alpha,a_\varepsilon(dd^{\Lambda})^*\alpha>_g\\
      &&+\lambda<d\alpha,a_\varepsilon d\alpha>_g+\lambda<d^{\Lambda}\alpha,a_\varepsilon d^{\Lambda}\alpha>_g .
   \end{eqnarray*}
   Since $\nabla^1_gJ=0$, $\nabla^1_gg=0$ and $\nabla^1_g\omega=0$,
   then $\nabla^1_g*_g=0$ and $\nabla^1_g*_s=0$.
   By (\ref{estimate}), we have
   \begin{eqnarray*}
      \|*_gd a_\varepsilon\wedge*_sd*_s*_g\alpha\|_g &\leq& C \{\|a'_\varepsilon\nabla^1_g(*_s*_g)\alpha\|_g+\|a'_\varepsilon\alpha\|_g\}\\
      &\leq& C \{\|a'_\varepsilon\nabla^1_g\alpha\|_g+\|a'_\varepsilon\alpha\|_g\},
   \end{eqnarray*}
   \begin{eqnarray*}
      \|*_gd*_sda_\varepsilon\wedge*_s*_g\alpha\|_g &\leq&
      C \{\|\nabla^1_g(*_sda_\varepsilon\wedge*_s*_g\alpha)\|_g+\|*_sda_\varepsilon\wedge*_s*_g\alpha\|_g\}\\
      &\leq& C \{\|a'_\varepsilon\nabla^1_g\alpha\|_g+\|a''_\varepsilon\alpha\|_g+\|a'_\varepsilon\alpha\|_g\}.
   \end{eqnarray*}
   Therefore,
   \begin{eqnarray*}
     |I_2(\varepsilon)| &\leq&C_2\int_M\|(dd^{\Lambda})^*\alpha\|_g\|\nabla^1_ga_\varepsilon\|_g \|\nabla^1_g\alpha\|_g \\
      &+& C_3 \int_M\|(dd^{\Lambda})^*\alpha\|_g\|(\nabla^1_g)^2a_\varepsilon\|_g \|\alpha\|_g   \\
      &+& C_4 \int_M\|(dd^{\Lambda})^*\alpha\|_g\|\nabla^1_ga_\varepsilon\|_g \|\alpha\|_g \\
      &+& C_5 \int_M (\lambda\|d\alpha\|_g+\lambda\|d^\Lambda\alpha\|_g)\|\nabla^1_ga_\varepsilon\|_g \|\alpha\|_g,
   \end{eqnarray*}
    where $C_4,C_5$ are some positive constants and $C_1=\max\{C_4,C_5\}$.
   Without loss of generality, we assume $C_1=C_2=C_3=1$.
   Choose cutoff functions $a_\varepsilon$, such that
  $$
  a_\varepsilon\geq0,\,\,\,\|\nabla^1_g a_\varepsilon\|_g\leq\varepsilon (a_\varepsilon)^{\frac{1}{2}}, \,\,\, \|(\nabla^1_g)^2 a_\varepsilon\|_g\leq\varepsilon^2
  $$
  and the subsets $a^{-1}_\varepsilon(1)\subset M$ exhaust $M$ as $\varepsilon\rightarrow 0$.
  Then
  \begin{eqnarray*}
    |I_2(\varepsilon)| &\leq& \varepsilon\int_M(a_\varepsilon)^{\frac{1}{2}}\cdot\|\alpha\|_g\cdot
   (\|(dd^\Lambda)^*\alpha\|_g+\lambda\|d\alpha\|_g+\lambda\|d^\Lambda\alpha\|_g) \\
   &&+\varepsilon\int_M(a_\varepsilon)^{\frac{1}{2}}\|(dd^\Lambda)^*\alpha\|_g\cdot\|\nabla^1_g \alpha\|_g
   +\varepsilon^2\int_M\|(dd^\Lambda)^*\alpha\|_g\cdot\|\alpha\|_g \\
     &\leq& \varepsilon\|\alpha\|_{L^2}[(\int_Ma_\varepsilon\|(dd^\Lambda)^*\alpha\|^2_g)^{\frac{1}{2}}
     +(\int_Ma_\varepsilon\lambda\|d\alpha\|^2_g)^{\frac{1}{2}}
     +(\int_Ma_\varepsilon\lambda\|d^\Lambda\alpha\|^2_g)^{\frac{1}{2}}] \\
      &&+\varepsilon\int_M(a_\varepsilon)^{\frac{1}{2}}\|(dd^\Lambda)^*\alpha\|_g\cdot\|\nabla^1_g \alpha\|_g
       +\varepsilon^2\int_M\|(dd^\Lambda)^*\alpha\|_g\cdot\|\alpha\|_g  \\
     &\leq& 2\varepsilon\|\alpha\|_{L^2}(\int_Ma_\varepsilon\|(dd^\Lambda)^*\alpha\|^2_g
     +\int_Ma_\varepsilon\lambda\|d\alpha\|^2_g
     +\int_Ma_\varepsilon\lambda\|d^\Lambda\alpha\|^2_g)^{\frac{1}{2}} \\
      &&+\varepsilon\int_M(a_\varepsilon)^{\frac{1}{2}}\|(dd^\Lambda)^*\alpha\|_g\cdot\|\nabla^1_g \alpha\|_g
       +\varepsilon^2\int_M\|(dd^\Lambda)^*\alpha\|_g\cdot\|\alpha\|_g \\
     &=& 2\varepsilon\|\alpha\|_{L^2}\cdot I_1(\varepsilon)^{\frac{1}{2}}+\varepsilon\int_M(a_\varepsilon)^{\frac{1}{2}}\|(dd^\Lambda)^*\alpha\|_g\cdot\|\nabla^1_g \alpha\|_g\\
     && +\varepsilon^2\int_M\|(dd^\Lambda)^*\alpha\|_g\cdot\|\alpha\|_g \\
     &\leq&4\varepsilon^2\|\alpha\|^2_{L^2}+\frac{1}{4}I_1(\varepsilon)+\varepsilon\int_M(a_\varepsilon)^{\frac{1}{2}}\|(dd^\Lambda)^*\alpha\|_g\cdot\|\nabla^1_g \alpha\|_g\\
     && +\varepsilon^2\int_M\|(dd^\Lambda)^*\alpha\|_g\cdot\|\alpha\|_g
  \end{eqnarray*}
  \begin{eqnarray*}
     &\leq&4\varepsilon^2\|\alpha\|^2_{L^2}+\frac{1}{4}I_1(\varepsilon)+\varepsilon\int_M\|(dd^\Lambda)^*\alpha\|_g\cdot\|\nabla^1_g \alpha\|_g\\
     && +\varepsilon^2\int_M\|(dd^\Lambda)^*\alpha\|_g\cdot\|\alpha\|_g.
  \end{eqnarray*}
  Since $I_1(\varepsilon)=|I_2(\varepsilon)|$,
  note that $(dd^\Lambda)^*\alpha$ and $\nabla^1_g \alpha$ are in $L^2$-space since $D_{d+d^\Lambda}$ is of essential self-adjointness
  (hence, $dom(D_{d+d^{\Lambda},min})=dom(D_{d+d^{\Lambda},max})$, $\ker\Delta_{d+d^{\Lambda}}=\ker D_{d+d^{\Lambda}}$,
 and $D_{d+d^{\Lambda}}\alpha=0$ in the sense of distribution implies that $\alpha\in L^2_4\Omega^k$),
  we get
  $$I_1(\varepsilon)\leq\frac{16}{3}\varepsilon^2\|\alpha\|^2_{L^2}+\frac{4}{3}\varepsilon\int_M\|(dd^\Lambda)^*\alpha\|_g\cdot\|\nabla^1_g \alpha\|_g
  +\frac{4}{3}\varepsilon^2\int_M\|(dd^\Lambda)^*\alpha\|_g\cdot\|\alpha\|_g ,$$
  and hence $I_1(\varepsilon)\rightarrow 0$ as $\varepsilon\rightarrow 0$.
  This completes the proof of Lemma \ref{key lemma}.  ~~~$\Box$

    \vskip 12pt

   Let $(M,g,J,\omega)$ be a $2n$-dimensional complete noncompact almost K\"ahler manifold with bounded geometry.
   It is same to the classical de Rham Laplacian operator $\Delta_d$,
   we can define $L^2$-symplectic cohomology groups as follows:
   \begin{equation}
     L^2H^k_{d+d^\Lambda}(M)
     =\frac{\ker(d+d^\Lambda)\cap  L^2\Omega^k}{\text{im}\, dd^\Lambda\cap  L^2\Omega^k},
   \end{equation}
   \begin{equation}
     L^2H^k_{dd^\Lambda}(M)
     =\frac{\ker(dd^\Lambda)\cap  L^2\Omega^k}{(\text{im}\, d+\text{im}\,d^\Lambda)\cap  L^2\Omega^k}.
   \end{equation}
   Since $D_{d+d^\Lambda}$ is fourth-order elliptic operator,
   if $D_{d+d^\Lambda}$ is of essential self-adjointness,
   then $\Delta_{d+d^\Lambda}$ and $D_{d+d^\Lambda}$  have the same kernel.
   For $\Delta_{dd^\Lambda}$ and $D_{dd^\Lambda}$, we have the similar results.

  With Lemma \ref{our lemma} and Lemma \ref{key lemma} one concludes, as in the closed manifolds,
   that the $L^2\Omega^k(M)$ of exterior $k$-forms on a complete manifold $M$ with bounded geometry admits the Hodge decomposition (cf. \cite{Ati,L¨¹ck,Shubin}).
    \begin{prop}\label{Hodge deco1}
     Suppose $(M,g,J,\omega)$ is a complete, noncompact almost K\"ahler manifold with bounded geometry,
     and $D_{d+d^{\Lambda}}$, $D_{dd^{\Lambda}}$ are of essential self-adjointness.
     Then
     $$
     L^2\Omega^k=L^2\mathcal{H}^k_{d+d^\Lambda}\oplus \overline{dd^\Lambda (L^2\Omega^k)}\oplus[\overline{d^*(L^2\Omega^{k+1})}+\overline{d^{\Lambda*}(L^2\Omega^{k-1})}],
    $$
   where $\overline{d(\cdot\cdot\cdot)}$ is the closure in $L^2\Omega^k$ of the intersection of $L^2\Omega^k$ with the
  image of $d$.
  Similarly, we can get
    $$
     L^2\Omega^k=L^2\mathcal{H}^k_{dd^\Lambda}\oplus \overline{(dd^\Lambda)^*
(L^2\Omega^k)}\oplus[\overline{d(L^2\Omega^{k-1})}+\overline{d^{\Lambda}(L^2\Omega^{k+1})}].
     $$
   \end{prop}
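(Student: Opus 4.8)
The plan is to read off the asserted splitting as the orthogonal decomposition of the Hilbert space $L^2\Omega^k$ attached to the closed subspace $L^2\mathcal{H}^k_{d+d^\Lambda}$ and its complement, following the scheme Gromov and L\"uck use for the de Rham decomposition (\ref{Hodge deco}). Set $R=dd^\Lambda(L^2\Omega^k)+d^*(L^2\Omega^{k+1})+d^{\Lambda*}(L^2\Omega^{k-1})$, where the operators act on their natural $L^2$-domains. First I would check mutual orthogonality of the three pieces of $R$ and of $L^2\mathcal{H}^k_{d+d^\Lambda}$. If $\alpha$ is $L^2$ and $d+d^\Lambda$-harmonic, Lemma \ref{key lemma} gives $d\alpha=d^\Lambda\alpha=0$ and $(dd^\Lambda)^*\alpha=0$, so Lemma \ref{our lemma} yields $\langle\alpha,d^*\gamma\rangle_g=\langle d\alpha,\gamma\rangle_g=0$, $\langle\alpha,d^{\Lambda*}\delta\rangle_g=\langle d^\Lambda\alpha,\delta\rangle_g=0$ and $\langle\alpha,dd^\Lambda\phi\rangle_g=\langle(dd^\Lambda)^*\alpha,\phi\rangle_g=0$; moreover, since $d^2=(d^\Lambda)^2=0$ and $dd^\Lambda=-d^\Lambda d$, any form $dd^\Lambda\beta$ is both $d$- and $d^\Lambda$-closed, so Lemma \ref{our lemma} again gives $\langle dd^\Lambda\beta,d^*\gamma\rangle_g=0$ and $\langle dd^\Lambda\beta,d^{\Lambda*}\delta\rangle_g=0$. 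Hence $L^2\mathcal{H}^k_{d+d^\Lambda}$, $\overline{dd^\Lambda(L^2\Omega^k)}$ and the closure of $d^*(L^2\Omega^{k+1})+d^{\Lambda*}(L^2\Omega^{k-1})$ are pairwise orthogonal; note that $d^*(L^2\Omega^{k+1})$ and $d^{\Lambda*}(L^2\Omega^{k-1})$ need not themselves be orthogonal, which is why the last bracket is only a sum.

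The heart of the argument is the reverse inclusion: if $\alpha\in L^2\Omega^k$ is orthogonal to all three pieces of $R$, then $\alpha\in L^2\mathcal{H}^k_{d+d^\Lambda}$. Testing the orthogonality relations against smooth compactly supported forms gives, in the distributional sense, $d\alpha=0$, $d^\Lambda\alpha=0$ and $(dd^\Lambda)^*\alpha=0$; in particular $dd^\Lambda\alpha=-d^\Lambda d\alpha=0$. Inspecting the elliptic operator $D_{d+d^\Lambda}$ of (\ref{elliptic}), each of its five terms has $d$, $d^\Lambda$, $dd^\Lambda$ or $(dd^\Lambda)^*$ as its rightmost factor applied to $\alpha$, so $D_{d+d^\Lambda}\alpha=0$ weakly. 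Since $D_{d+d^\Lambda}$ is a fourth-order elliptic operator, elliptic regularity forces $\alpha$ to be smooth, and as $\ker D_{d+d^\Lambda}=\ker\Delta_{d+d^\Lambda}=\mathcal{H}^k_{d+d^\Lambda}$ we get $\alpha\in L^2\mathcal{H}^k_{d+d^\Lambda}$. Combining the two directions, $R^\perp=L^2\mathcal{H}^k_{d+d^\Lambda}$, hence $L^2\Omega^k=L^2\mathcal{H}^k_{d+d^\Lambda}\oplus\overline{R}$; and since $\overline{dd^\Lambda(L^2\Omega^k)}$ is a closed subspace orthogonal to the closure of $d^*(L^2\Omega^{k+1})+d^{\Lambda*}(L^2\Omega^{k-1})$, their sum is already closed, which gives the stated three-fold decomposition. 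The second decomposition follows verbatim from the complex with its arrows reversed: one uses Proposition \ref{Yau theorem2}, the elliptic operator $D_{dd^\Lambda}$ of (\ref{elliptic 2}), and the $dd^\Lambda$-analogue of Lemma \ref{key lemma} (characterizing $dd^\Lambda$-harmonicity by $d^*\alpha=d^{\Lambda*}\alpha=dd^\Lambda\alpha=0$), which is proved by the same Gaffney cutoff estimate with $d,d^\Lambda,(dd^\Lambda)^*$ replaced by $d^*,d^{\Lambda*},dd^\Lambda$, or alternatively by conjugating the first complex by the symplectic star $*_s$.

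I expect the main obstacle to be the passage from the weak (distributional) identities to genuine smooth $d+d^\Lambda$-harmonicity: $\Delta_{d+d^\Lambda}$ is itself not elliptic, so the regularity must be routed through the auxiliary elliptic operator $D_{d+d^\Lambda}$, together with the identity $\ker D_{d+d^\Lambda}=\ker\Delta_{d+d^\Lambda}$; and all the integrations by parts on the noncompact $M$ must be justified — this is exactly the role of Lemma \ref{Gromov lemma} and the cutoff functions $a_\varepsilon$, the relevant analysis on the harmonic side having already been packaged in Lemma \ref{key lemma}. A secondary technical point is that $d^*(L^2\Omega^{k+1})$ and $d^{\Lambda*}(L^2\Omega^{k-1})$ are not mutually orthogonal, so the final summand must be understood as the closure of their algebraic sum rather than as an orthogonal direct sum.
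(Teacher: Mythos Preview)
Your proposal is correct and follows exactly the route the paper indicates: the paper does not give a separate proof of this proposition but simply remarks that ``with Lemma \ref{our lemma} and Lemma \ref{key lemma} one concludes, as in the closed manifold'' that the decomposition holds, leaving the details implicit. Your write-up supplies precisely those details --- the orthogonality from Lemmas \ref{our lemma} and \ref{key lemma}, and the reverse inclusion via testing against compactly supported forms and invoking elliptic regularity for $D_{d+d^\Lambda}$ --- and in fact is more careful than the paper about the regularity step and about why the last summand is only a (closed) sum rather than an orthogonal direct sum.
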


   If $D_{d+d^{\Lambda}}$ is an essentially self-adjoint elliptic operator,
    then $\Delta_{d+d^\Lambda}$ and $D_{d+d^\Lambda}$  have the same kernel.
    It is easy to get an isometric isomorphism
    $$
    L^2\mathcal{H}^k_{d+d^\Lambda} \cong L^2H^k_{d+d^\Lambda}.
    $$
   Similarly, if $D_{dd^{\Lambda}}$ is an essentially self-adjoint elliptic operator,
   then
      $$
    L^2\mathcal{H}^k_{dd^\Lambda} \cong L^2H^k_{dd^\Lambda}.
    $$
       More details, see \cite{Ati,L¨¹ck,Shubin}.

Similarly to Corollary $3.8$ and Corollary $3.19$ in \cite{TY},
   we get the following isomorphism.

\begin{prop}\label{Hodge decoL2}
     Suppose that $(M,g,J,\omega)$ is a $2n$-dimensional complete,
     noncompact almost K\"ahler manifold with bounded geometry.
     If $D_{d+d^{\Lambda}}$ and $D_{dd^{\Lambda}}$ are of essential self-adjointness,
     then the Lefschetz operator defines isomorphisms
     $$
   L^{n-k}:L^2\mathcal{H}^k_{d+d^\Lambda}\cong L^2\mathcal{H}^{2n-k}_{d+d^\Lambda},
   $$
    $$
   L^{n-k}:L^2\mathcal{H}^k_{dd^\Lambda}\cong L^2\mathcal{H}^{2n-k}_{dd^\Lambda}.
   $$
   \end{prop}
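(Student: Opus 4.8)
The plan is to follow the strategy of Tseng and Yau's Corollaries 3.8 and 3.19 in \cite{TY}, upgrading it to the $L^2$, complete noncompact setting. Two ingredients carry the argument: (i) the symplectic analogues of the K\"ahler identities, i.e. the commutation relations among the Lefschetz operators $L=\omega\wedge\cdot$, $\Lambda=L^*$, the operators $d,d^\Lambda$, and the $g$-adjoints $d^*,d^{\Lambda*}$ --- in a suitable normalization $[L,d]=0$, $[L,d^{\Lambda*}]=0$, $[\Lambda,d^\Lambda]=0$, $[\Lambda,d^*]=0$, together with $[L,d^\Lambda]=\pm d$ and $[\Lambda,d^{\Lambda*}]=\pm d^*$; all of these are \emph{pointwise} algebraic identities, hence hold on any almost K\"ahler manifold irrespective of compactness; and (ii) the $L^2$-characterizations of harmonic forms already in hand, namely Lemma \ref{key lemma} and its evident $dd^\Lambda$-analogue, which give $L^2\mathcal{H}^k_{d+d^\Lambda}=\{\alpha\in L^2\Omega^k:\ d\alpha=d^\Lambda\alpha=(dd^\Lambda)^*\alpha=0\}$ and $L^2\mathcal{H}^k_{dd^\Lambda}=\{\alpha\in L^2\Omega^k:\ d^*\alpha=d^{\Lambda*}\alpha=dd^\Lambda\alpha=0\}$, together with the fact that such forms are smooth by elliptic regularity of $D_{d+d^\Lambda}$ (resp. $D_{dd^\Lambda}$). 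Since $g(\cdot,\cdot)=\omega(\cdot,J\cdot)$ forces $\|\omega\|_g$ to be constant, $L$ and $\Lambda$ are bounded operators on $L^2\Omega^\bullet(M)$.

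First I would show that $L$ and $\Lambda$ map $L^2\mathcal{H}^k_{d+d^\Lambda}$ into $L^2\mathcal{H}^{k+2}_{d+d^\Lambda}$ and $L^2\mathcal{H}^{k-2}_{d+d^\Lambda}$ respectively, and likewise for $\mathcal{H}^\bullet_{dd^\Lambda}$. For instance, for $\alpha\in L^2\mathcal{H}^k_{d+d^\Lambda}$ one checks $d(L\alpha)=L(d\alpha)=0$, $d^\Lambda(L\alpha)=Ld^\Lambda\alpha+d\alpha=0$, and, writing $(dd^\Lambda)^*=d^{\Lambda*}d^*$ and using $[L,d^{\Lambda*}]=0$ together with $d^*L=Ld^*-d^{\Lambda*}$, one gets $(dd^\Lambda)^*(L\alpha)=d^{\Lambda*}d^*(L\alpha)=L\,d^{\Lambda*}d^*\alpha=L\,(dd^\Lambda)^*\alpha=0$; the computations for $\Lambda$ and for the $dd^\Lambda$-harmonic case are of exactly the same flavour, and each is the pointwise computation Tseng and Yau perform on a closed manifold. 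The point to emphasize is that no integration by parts on $M$ enters this step: all the relations used are local operator identities, and the only analytic input --- the fact that $L^2$-harmonicity is equivalent to the three pointwise vanishing conditions --- is precisely what Lemma \ref{key lemma} supplies.

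Next I would invoke the fibrewise (linear-algebra) hard Lefschetz theorem on each symplectic fibre $(T_xM,\omega_x)$: for $k\le n$ both $L^{n-k}:\Lambda^kT^*M\to\Lambda^{2n-k}T^*M$ and $\Lambda^{n-k}:\Lambda^{2n-k}T^*M\to\Lambda^kT^*M$ are bundle isomorphisms, so $\Lambda^{n-k}L^{n-k}$ and $L^{n-k}\Lambda^{n-k}$ are fibrewise-invertible bundle endomorphisms. Since the fibres are all symplectically standard, the characteristic polynomial of $\Lambda^{n-k}L^{n-k}$ is independent of $x$, and it coincides with that of $L^{n-k}\Lambda^{n-k}$; by Cayley--Hamilton there is therefore a single constant-coefficient polynomial $q$ with $q(\Lambda^{n-k}L^{n-k})=(\Lambda^{n-k}L^{n-k})^{-1}$ and $q(L^{n-k}\Lambda^{n-k})=(L^{n-k}\Lambda^{n-k})^{-1}$ on all of $M$. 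Injectivity of $L^{n-k}$ on $L^2\mathcal{H}^k_{d+d^\Lambda}$ is then immediate from fibrewise injectivity. For surjectivity, given $\beta\in L^2\mathcal{H}^{2n-k}_{d+d^\Lambda}$ set $\alpha:=q(\Lambda^{n-k}L^{n-k})\,\Lambda^{n-k}\beta$; by the previous step $\Lambda^{n-k}\beta\in L^2\mathcal{H}^k_{d+d^\Lambda}$, and $q(\Lambda^{n-k}L^{n-k})$, being a polynomial in $L$ and $\Lambda$, preserves $L^2\mathcal{H}^k_{d+d^\Lambda}$; hence $\alpha\in L^2\mathcal{H}^k_{d+d^\Lambda}$, and $L^{n-k}\alpha=q(L^{n-k}\Lambda^{n-k})(L^{n-k}\Lambda^{n-k})\beta=\beta$. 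The same argument, with $(dd^\Lambda)^*,d^*,d^{\Lambda*},dd^\Lambda$ in the roles of $d+d^\Lambda,d,d^\Lambda,(dd^\Lambda)^*$, gives the isomorphism $L^{n-k}:L^2\mathcal{H}^k_{dd^\Lambda}\cong L^2\mathcal{H}^{2n-k}_{dd^\Lambda}$.

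The main obstacle --- and the only genuine departure from the compact argument in \cite{TY} --- is that $L^2\mathcal{H}^k_{d+d^\Lambda}$ and $L^2\mathcal{H}^k_{dd^\Lambda}$ need not be finite dimensional, so one cannot conclude that an injective degree-shift operator is an isomorphism by a dimension count against the complementary degree; surjectivity must be produced explicitly. This is exactly why it is essential to know that the fibrewise inverse of $\Lambda^{n-k}L^{n-k}$ is expressible as a polynomial in $L$ and $\Lambda$, so that it is a bounded operator preserving the harmonic spaces. A secondary, routine point is to record that every operator identity used in the second step is local, hence valid on the noncompact $M$ even though Stokes' formula is not available in general there.
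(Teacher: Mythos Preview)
Your proposal is correct and follows essentially the same route as the paper's proof. The paper argues that $\omega$ is bounded (invoking the second canonical connection; your observation that $\|\omega\|_g$ is constant is simpler and equally valid), then cites Tseng--Yau's Lemma~3.7 to get the pointwise identities $[\Delta_{d+d^\Lambda},L]=0$ and $[\Delta_{d+d^\Lambda},\Lambda]=0$, and concludes the isomorphism from fibrewise Lefschetz; you unpack the same commutator algebra at the level of the three conditions $d\alpha=d^\Lambda\alpha=(dd^\Lambda)^*\alpha=0$ supplied by Lemma~\ref{key lemma}, and you are more explicit than the paper about surjectivity, producing the inverse as a polynomial in $L,\Lambda$ via Cayley--Hamilton --- a worthwhile addition, since in the noncompact $L^2$ setting the harmonic spaces may be infinite dimensional and one cannot fall back on a dimension count.
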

 \begin{proof}
  For an almost K\"ahler manifold $(M,g,J,\omega)$,
  there exists a connection called second canonical connection $\nabla^1_g$ whose torsion tensor has vanishing $(1,1)$-part.
  Moreover,
  $\nabla^1_g$ satisfies $\nabla^1_gg=0$, $\nabla^1_gJ=0$ and $\nabla^1_g\omega=0$ (cf. \cite{Cha,MS}).
  Since $\nabla^1_g\omega=0$, it implies that $\omega$ is bounded.
    We only prove the first isomorphism, and the other is similar.
    Since $\omega$ is bounded, if $\alpha$ is $L^2$, the same $L^{n-k}\alpha$.
   Note that $\Delta_{d+d^\Lambda}$ preserves the degree of forms
   and $[\Delta_{d+d^\Lambda},L]=0$, $[\Delta_{d+d^\Lambda},\Lambda]=0$ (\cite[Lemma 3.7]{TY}),
  by Proposition \ref{Hodge deco1},
   we get that $L^{n-k}:L^2\mathcal{H}^k_{d+d^\Lambda}\rightarrow L^2\mathcal{H}^{2n-k}_{d+d^\Lambda}$ is an isomorphism.
   \end{proof}

   Decomposition (\ref{Hodge deco}) and Proposition \ref{Hodge decoL2} lead the Lefschetz vanishing property which is similar with Hitchin's result
   (see \cite[Theorem 2]{Hitchin}).

   \vskip 6pt

  {\bf Proof of Theorem \ref{main 1}.}
  It is clear that the symplectic form $\omega$ is bounded with respect to the given almost K\"ahler metric $g$.
  By hypothesis, we assume that $\omega=d\eta$, where $\eta$ satisfy
  $$
  \|\eta(x)\|_g\leq c(\rho(x_0,x)+1).
  $$
  Then for every closed $L^2$ $k$-form $\alpha$, $k<n$, the form $L^{n-k}\alpha=\omega^{n-k}\wedge\alpha=d\beta$,
  where $\beta=\eta\wedge\omega^{n-k-1}\wedge\alpha$.
  By \cite[Proof of Theorem 1]{Hitchin}, we obtain that $\omega^{n-k}=\eta\wedge\omega^{n-k-1}$ is $d$(sublinear). Then applying \cite[Theorem 1]{Hitchin} again we get that $L^{n-k}\alpha=d(\eta\wedge\omega^{n-k-1}\wedge\alpha)=d\beta$ lies in the closure of $dL^2\Omega^{2n-k-1}\cap L^2\Omega^{2n-k}$.
  In particular, if $\alpha$ is $dd^\Lambda$-harmonic,
   by Proposition \ref{Hodge decoL2}, $L^{n-k}\alpha$ is also $dd^\Lambda$-harmonic,
  i.e., $d\beta$ is $dd^\Lambda$-harmonic.
  Then $d\beta$ is  $\Delta_d$-harmonic.
  Hence, by decomposition (\ref{Hodge deco}),
  $L^{n-k}\alpha=d\beta=0$.
  Proposition \ref{Hodge decoL2} has stated that $L^{n-k}$ is an isomorphism
   from $L^2\mathcal{H}^k_{dd^\Lambda}$ to $L^2\mathcal{H}^{2n-k}_{dd^\Lambda}$ for $k<n$.
   Therefore, $\alpha=0$.
   At last, we can summarize that both $L^2\mathcal{H}^k_{dd^\Lambda}=0$ for $k<n$ and $L^2\mathcal{H}^k_{dd^\Lambda}=0$ for $k>n$.

   By simple calculation, we find that the Laplacians $\Delta_{d+d^\Lambda}$ and $\Delta_{dd^\Lambda}$ satisfy
   $$
   *_g\Delta_{d+d^\Lambda}=\Delta_{dd^\Lambda}*_g.
   $$
   Hence, we have
   $
   *_g: L^2\mathcal{H}^k_{dd^\Lambda}\rightarrow L^2\mathcal{H}^{2n-k}_{d+d^\Lambda}
   $
   is an isomorphism.
   Therefore, we can also summarize that $L^2\mathcal{H}^k_{d+d^\Lambda}=0$, unless $k=n$.\qed

 \section{Symplectic Euler characteristics  }\setcounter{equation}{0}

    A Hilbert space $\mathcal{H}$ with a unitary action of a countable group $\Gamma$ is called a $\Gamma$-module if
   $\mathcal{H}$ is isomorphic to a $\Gamma$-invariant subspace in the space of $L^2$-functions on $\Gamma$ with values in some Hilbert space $H$.
  To each $\Gamma$-module, one assigns the Von Neumann dimension, also called $\Gamma$-dimension,
   $0\leq \dim_{\Gamma}\mathcal{H}\leq \infty$, which is a nonnegative real number or $+\infty$ (see \cite{Ati,L¨¹ck,Pan,Shubin}).
  The precise definition is not important for the moment but the following properties convey the idea of $\dim_{\Gamma}\mathcal{H}$ as some kind of size of the
   ``quotient space" $\mathcal{H}/\Gamma$:

     (i) $\dim_{\Gamma}\mathcal{H}=0$ $\Leftrightarrow$ $\mathcal{H}=0$.

     (ii) If $\Gamma$ is a finite group, then $\dim_{\Gamma}\mathcal{H}=\dim\mathcal{H}/card\Gamma$.

     (iii)$\dim_{\Gamma}\mathcal{H}$ is additive. Given $0\rightarrow\mathcal{H}_1\rightarrow\mathcal{H}_2\rightarrow\mathcal{H}_3\rightarrow0$,
     one has $\dim_{\Gamma}\mathcal{H}_2=\dim_{\Gamma}\mathcal{H}_1+\dim_{\Gamma}\mathcal{H}_3$.

     (iv) If $\mathcal{H}$ equals the space of $L^2$-functions $\Gamma\rightarrow H$,
    then $\dim_{\Gamma}\mathcal{H}=\dim H$. In particular, if $H=\mathbb{R}^n$, then $\dim_{\Gamma}\mathcal{H}=n$.

 Here we are interested in the situation where $\Gamma$ is a discrete faithful group of symplectomorphisms of a symplectic manifold $(M,\omega)$.
Find an almost K\"ahler structure $(g,J,\omega)$ on $(M,\omega)$, then one has an almost K\"ahler manifold (cf \cite{MS}). It is not hard to see that the given group $\Gamma$ acts on $(M,g,J,\omega)$ as deck transformation group \cite{Cha}. One can easily show that the spaces $L^2\mathcal{H}^k_{d+d^\Lambda}$, $L^2\mathcal{H}^k_{dd^\Lambda}$
   of harmonic $L^2$-forms are $\Gamma$-module for all degrees $k$ (see \cite{Ati,L¨¹ck,Pan,Shubin}),
  and then one defines the $L^2$-symplectic Betti numbers $L^2\beta^{s,1}_k\triangleq\dim_{\Gamma}L^2\mathcal{H}^k_{d+d^\Lambda}$
 and $L^2\beta^{s,2}_k\triangleq\dim_{\Gamma}L^2\mathcal{H}^k_{dd^\Lambda}$.
    The most interesting case is when $M/\Gamma$ is closed. Then the $L^2$-symplectic Betti numbers are finite $L^2\beta^s_k<\infty$
   and the $L^2$-symplectic Euler characteristics is defined by
   $$
  L^2\chi^{s,1}(M)\triangleq\sum^{2n}_{k=0}(-1)^kL^2\beta^{s,1}_k
   $$
   and
   $$
   L^2\chi^{s,2}(M)\triangleq\sum^{2n}_{k=0}(-1)^kL^2\beta^{s,2}_k.
   $$

   First recall how Hodge theory works on a complete non-compact Riemannian manifold (\cite{Hitchin}).
   If $L^2\Omega^k$ denotes the Hilbert space of $L^2$ $k$-forms, then the $L^2$-de Rham cohomology group
   $L^2H^k_{dR}$ is defined as the quotient of the space of closed $L^2$ $k$-forms by the closure of the space
   $$
   dL^2\Omega^{k-1}\cap L^2\Omega^k.
   $$
    Similarly, we can define the $L^2$-symplectic cohomology group on a complete, noncompact almost K\"ahler manifold $(M,g,J,\omega)$
    with bounded geometry by
    $$
     L^2H^k_{d+d^{\Lambda}}=\frac{\ker(d+d^{\Lambda})\cap L^2\Omega^k}{\overline{dd^{\Lambda}L^2\Omega^{k-1}\cap L^2\Omega^k}}.
    $$
 If $D_{d+d^{\Lambda}}$ is of essential self-adjointness
  by decomposition (\ref{Hodge deco}) and Proposition \ref{Hodge deco1}, we can find that
  $$
  L^2H^k_{dR}\cong L^2\mathcal{H}^k_d,\,\,\, L^2H^k_{d+d^{\Lambda}}\cong L^2\mathcal{H}^k_{d+d^\Lambda},
  $$
  and every $L^2$ cohomology class has a $L^2$ harmonic representative form which is the only one.

  \vskip 12pt

   A closed symplectic manifold $(M,\omega)$ is said to satisfy the $dd^\Lambda$-Lemma if every $d$-exact, $d^\Lambda$-closed form is $dd^\Lambda$-exact.
   In fact, it turns out that the following conditions are equivalent on a closed symplectic manifold $(M,\omega)$ (\cite{Angella,Cava,Gu,Mathieu,Merk,TY,YanH}):

   $\bullet$  $(M,\omega)$ satisfies the $dd^{\Lambda}$-Lemma;

   $\bullet$ the natural homomorphism $H^{\bullet}_{d+d^\Lambda}(M;\mathbb{R})\rightarrow H^{\bullet}_{dR}(M;\mathbb{R})$ is actually an isomorphism;

   $\bullet$ every de Rham cohomology class admits a representative being both $d$-closed and $d^\Lambda$-closed;

   $\bullet$ the hard Lefschetz Condition holds on $(M,\omega)$.

   More generally, Dong Yan has gotten the following result on a symplectic manifold which may be not compact.
    \begin{prop}\label{Yan theo}
    ({\rm \cite{YanH}})
    Let $(M,\omega)$  be a symplectic manifold with dimension $2n$.
    Then the following two assertions are equivalent:

    1. every de Rham cohomology class admits a representative being both $d$-closed and $d^\Lambda$-closed;

    2. For any $k\leq n$, the cup product $L^k:H^{n-k}_{dR}(M;\mathbb{R})\rightarrow H^{n+k}_{dR}(M;\mathbb{R})$ is surjective.
    \end{prop}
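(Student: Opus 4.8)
The plan is to prove the equivalence purely at the level of the symplectic $sl(2)$-structure on forms, using no metric and no compactness, so that the argument applies verbatim on a possibly noncompact $M$ where only $\omega$ (and $d\omega=0$) enters. First I would record the standard ingredients: the triple $(L,\Lambda,H)$ with $[\Lambda,L]=H$, $[H,L]=-2L$, $[H,\Lambda]=2\Lambda$ acting on $\Omega^\ast(M)$, the fibrewise Lefschetz decomposition $\Omega^m=\bigoplus_r L^r P^{m-2r}$ into primitive components $P^j=\ker\Lambda\cap\Omega^j$, and the differential identities $d^\Lambda=[d,\Lambda]$ together with $[L,d]=0$. From these one obtains, by the Jacobi identity, $[L,d^\Lambda]=\pm d$, $[\Lambda,d]=\mp d^\Lambda$ and $[\Lambda,d^\Lambda]=0$. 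A one-line computation then shows that a primitive form is symplectic harmonic if and only if it is $d$-closed, since for primitive $\beta$ one has $d^\Lambda\beta=-\Lambda d\beta$.

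For the implication from assertion $1$ to assertion $2$ I would first observe that the space $\mathcal H_s=\ker d\cap\ker d^\Lambda$ of symplectic harmonic forms is an $sl(2)$-submodule of $\Omega^\ast(M)$. Indeed $[L,d]=0$ and $[L,d^\Lambda]=\pm d$ give $d(L\gamma)=d^\Lambda(L\gamma)=0$ for $\gamma\in\mathcal H_s$, while $[\Lambda,d^\Lambda]=0$ and $[\Lambda,d]=\mp d^\Lambda$ give $d(\Lambda\gamma)=d^\Lambda(\Lambda\gamma)=0$, and the grading operator $H$ visibly preserves $\mathcal H_s$. Because the weights of $H$ are the bounded integers $n-m$ with $0\le m\le 2n$, the representation theory of $sl(2)$ applies degree by degree and yields that $L^k\colon \mathcal H_s^{\,n-k}\to\mathcal H_s^{\,n+k}$ is an isomorphism. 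Now take any class $c\in H^{n+k}_{dR}$; by assertion $1$ it has a symplectic harmonic representative $\gamma\in\mathcal H_s^{\,n+k}$, and by the isomorphism just established $\gamma=L^k\delta$ with $\delta\in\mathcal H_s^{\,n-k}$, in particular $d$-closed. Hence $c=[\gamma]=L^k[\delta]$ with $[\delta]\in H^{n-k}_{dR}$, which is exactly the surjectivity asserted in $2$.

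For the converse I would reduce the existence of symplectic harmonic representatives to the symplectic $dd^\Lambda$-lemma — the statement that every $d$-exact, $d^\Lambda$-closed form is $dd^\Lambda$-exact — by the following correction trick. Given a $d$-closed $\alpha$, the identity $d^\Lambda=[d,\Lambda]$ shows $d^\Lambda\alpha=d(\Lambda\alpha)$, so $d^\Lambda\alpha$ is $d$-exact, and it is trivially $d^\Lambda$-closed. The $dd^\Lambda$-lemma then provides $\rho$ with $d^\Lambda\alpha=dd^\Lambda\rho$, and using $d^\Lambda d=-dd^\Lambda$ one checks at once that $\alpha'=\alpha+d\rho$ is cohomologous to $\alpha$, still $d$-closed, and satisfies $d^\Lambda\alpha'=d^\Lambda\alpha-dd^\Lambda\rho=0$; thus $\alpha'$ is a symplectic harmonic representative of $[\alpha]$. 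It therefore remains to deduce the $dd^\Lambda$-lemma from the surjectivity hypothesis $2$, and this is the step I expect to be the main obstacle. Here I would argue along the lines of Yan's proof of the hard Lefschetz theorem: decompose the given form into its primitive Lefschetz components and run an induction on the Lefschetz length, at each stage using the surjectivity of $L^k$ on cohomology to solve the primitive-level equation producing the required $dd^\Lambda$-primitive. The delicate point is precisely that symplectic Hodge theory carries no positive inner product forcing harmonic forms to be orthogonal to exact ones, so the primitive $\rho$ in $d^\Lambda\alpha=dd^\Lambda\rho$ cannot be produced by an elliptic projection and must instead be extracted from the cohomological surjectivity in $2$.
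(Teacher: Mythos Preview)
The paper does not supply a proof of this proposition: it is quoted verbatim as a result of Yan \cite{YanH} and left without argument, so there is no ``paper's own proof'' to compare against. What follows is therefore an assessment of your proposal on its own terms, with Yan's original argument as the natural benchmark.

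Your implication $(1)\Rightarrow(2)$ is correct and is essentially Yan's argument. The observation that $\mathcal H_s=\ker d\cap\ker d^\Lambda$ is stable under $L$, $\Lambda$, $H$ is right, and since $L^{k}\colon\Omega^{n-k}\to\Omega^{n+k}$ is a pointwise linear isomorphism whose inverse is a universal polynomial in $L,\Lambda,H$, the restriction $L^{k}\colon\mathcal H_s^{\,n-k}\to\mathcal H_s^{\,n+k}$ is bijective without any finite-dimensionality hypothesis. Surjectivity of $L^k$ on cohomology then follows exactly as you say.

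For $(2)\Rightarrow(1)$ there is a genuine gap. Your reduction ``$dd^\Lambda$-lemma $\Rightarrow$ existence of symplectic-harmonic representatives'' is fine, but the remaining step---deducing the full $dd^\Lambda$-lemma from mere surjectivity of $L^k$ on cohomology---is precisely the substance of the result, and you only gesture at it (``argue along the lines of Yan's proof \ldots\ run an induction on the Lefschetz length''). Two points are worth flagging. First, the known proofs that hard Lefschetz implies the $dd^\Lambda$-lemma (Merkulov, Guillemin, Cavalcanti) are written for \emph{closed} manifolds and use features such as finite-dimensionality or duality of de Rham cohomology; on a possibly noncompact $M$, where only surjectivity is assumed, this route needs justification you have not provided. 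Second, Yan's own proof of $(2)\Rightarrow(1)$ does \emph{not} pass through the $dd^\Lambda$-lemma at all: given a closed $\alpha$ he works directly with its Lefschetz decomposition $\alpha=\sum_r L^r\beta_r$ and uses the surjectivity hypothesis inductively to modify $\alpha$ within its cohomology class until every primitive component $\beta_r$ becomes $d$-closed (hence symplectic harmonic). If you intend to follow Yan, that direct construction is what you should carry out; the detour through the $dd^\Lambda$-lemma is not a simplification here and, as written, is not a proof.
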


    \begin{rem}
    The above assertion 2 is just the definition of hard Lefschetz property on a symplectic manifold which may be not compact.
    \end{rem}

    Let $(M,g,J,\omega)$ be a complete noncompact almost K\"ahler manifold with bounded geometry.
   Denote the Sobolev space
    $$\displaystyle{L^2_l\Omega^{k}=\{ \alpha\in \Omega^{k}~|~\sum_{i=0}^l|(\nabla^1_g)^i\alpha|_g\in L^2(M)\}},$$
    where $\nabla^1_g$ be the second canonical connection with respect to the given almost K\"ahler structure $(g,J,\omega)$ (cf. \cite{Cha,MS}).
     Now we define the $L^2$-$dd^\Lambda$-Lemma on a complete noncompact almost K\"ahler manifold.
   \begin{defi}
    Let $(M,g,J,\omega)$ be a complete noncompact almost K\"ahler manifold with bounded geometry.
    Let $\alpha\in L^2\Omega^k$ be a $d$- and $d^\Lambda$-closed differential form.
     We say that the $L^2$-$dd^\Lambda$-Lemma holds if the following properties are equivalent:

     (i) $\alpha=d\beta$, $\beta\in L^2_1\Omega^{k-1}$;

     (ii) $\alpha=d^\Lambda\gamma$, $\gamma\in L^2_1\Omega^{k+1}$;

     (ii) $\alpha=dd^\Lambda\theta$, $\theta\in L^2_2\Omega^k$.
     \end{defi}

  \begin{prop}\label{key prop}
    Let $(M,g,J,\omega)$ be a $2n$-dimensional closed almost K\"ahler manifold, $\Pi:(\tilde{M},\tilde{g},\tilde{J},\tilde{\omega})\rightarrow (M,g,J,\omega)$ the universal covering map.
    If $(M,g,J,\omega)$ satisfies the hard Lefschetz property, then $L^2$-$dd^\Lambda$-Lemma holds on $(\tilde{M},\tilde{g},\tilde{J},\tilde{\omega})$.
  \end{prop}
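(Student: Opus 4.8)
Throughout I drop tildes and write $\omega,g,\dots$ for the lifted data. The first point is that the $L^2$ machinery of Section 2 is available on $\tilde M$ in a strong form: the lifted second canonical connection still satisfies $\nabla^1_g\omega=0$, so on $\tilde M$ the operators $L,\Lambda,*_s$ — hence $d^\Lambda=(-1)^{k+1}*_sd*_s$ — are parallel; in particular they, and $d$, are differential operators with uniformly bounded coefficients, the Lefschetz decomposition $\alpha=\sum_rL^r\alpha_r$ into primitive components is orthogonal and bounded in $L^2$ and in every Sobolev space $L^2_l$, and $d,d^\Lambda$ map $L^2_l\to L^2_{l-1}$. Moreover $\tilde M$ is a cocompact $\Gamma$-cover of the closed $M$, so the $\Gamma$-invariant fourth-order elliptic operator $D_{d+d^\Lambda}$ has closed range in $L^2$ (standard $\Gamma$-Fredholm theory, cf. \cite{Shubin}); hence the decomposition of Proposition \ref{Hodge deco1} refines on $\tilde M$ to a genuine orthogonal direct sum
$$L^2\Omega^k=L^2\mathcal H^k_{d+d^\Lambda}\oplus dd^\Lambda\!\left(L^2_2\Omega^k\right)\oplus\left(d^*(L^2_1\Omega^{k+1})+d^{\Lambda*}(L^2_1\Omega^{k-1})\right),$$
and likewise $L^2\mathcal H^k_{d+d^\Lambda}\cong L^2H^k_{d+d^\Lambda}(\tilde M)$, with unique harmonic representatives.

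\emph{Reductions.} If $\alpha=dd^\Lambda\theta$ with $\theta\in L^2_2\Omega^k$, then $\beta:=d^\Lambda\theta\in L^2_1\Omega^{k-1}$ realises (i) and $\gamma:=-d\theta\in L^2_1\Omega^{k+1}$ realises (ii) (as $dd^\Lambda=-d^\Lambda d$); so (iii) implies (i) and (ii). The symplectic star $*_s$ is an $L^2_l$-isometry intertwining $d$ with $\pm d^\Lambda$ and $dd^\Lambda$ with $\pm dd^\Lambda$, so the equivalence (ii)$\Leftrightarrow$(iii) for $\alpha$ is the equivalence (i)$\Leftrightarrow$(iii) for $*_s\alpha$, and (i)$\Leftrightarrow$(ii) then follows formally. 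Thus it suffices to prove (i)$\Rightarrow$(iii).

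\emph{Reduction to a vanishing statement.} Let $\alpha\in L^2\Omega^k$ be $d$- and $d^\Lambda$-closed with $\alpha=d\beta$, $\beta\in L^2_1\Omega^{k-1}$. By Lemma \ref{our lemma}, $\langle\alpha,d^*\eta\rangle=\langle d\alpha,\eta\rangle=0$ and $\langle\alpha,d^{\Lambda*}\zeta\rangle=\langle d^\Lambda\alpha,\zeta\rangle=0$ for all $\eta\in L^2_1\Omega^{k+1}$, $\zeta\in L^2_1\Omega^{k-1}$, so $\alpha$ is orthogonal to the last summand above; hence $\alpha=\alpha_h+dd^\Lambda\theta$ with $\alpha_h\in L^2\mathcal H^k_{d+d^\Lambda}$ and $\theta\in L^2_2\Omega^k$. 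Since then $\alpha_h=d(\beta-d^\Lambda\theta)$ with $\beta-d^\Lambda\theta\in L^2_1\Omega^{k-1}$, everything comes down to showing
$$L^2\mathcal H^k_{d+d^\Lambda}(\tilde M)\cap d\!\left(L^2_1\Omega^{k-1}\right)=\{0\},$$
i.e. that the natural map $L^2\mathcal H^k_{d+d^\Lambda}(\tilde M)\to L^2H^k_{dR}(\tilde M)$ is injective; for then $\alpha_h=0$ and $\alpha=dd^\Lambda\theta$ is exactly (iii).

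\emph{The crux.} This injectivity is where the hard Lefschetz hypothesis on $M$ is used, and it is the main obstacle. Because $[\Delta_{d+d^\Lambda},L]=[\Delta_{d+d^\Lambda},\Lambda]=0$ (\cite[Lemma 3.7]{TY}) and $L$ induces isomorphisms among the $L^2\mathcal H^\bullet_{d+d^\Lambda}$ (Proposition \ref{Hodge decoL2}), the Lefschetz decomposition restricts to these harmonic spaces, so it is enough to kill a primitive $\eta\in L^2\mathcal H^p_{d+d^\Lambda}$ of degree $p\le n$ lying in $d(L^2_1\Omega^{p-1})$; note such $\eta$ automatically has $d\eta=d^\Lambda\eta=0$ (for primitive forms $d^\Lambda$ records the $L$-component of $d$). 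I would then transcribe, $\Gamma$-equivariantly, Yan's proof that the hard Lefschetz property forces the $dd^\Lambda$-Lemma \cite{YanH}: writing an $L^2_1$-primitive potential of $\eta$ in Lefschetz components and feeding the pieces through the commutation relations among $d,d^\Lambda,L,\Lambda$ reduces the claim to the surjectivity of the Lefschetz maps on the primitive part of $L^2H^\bullet_{dR}(\tilde M)$. The delicate point — the genuine content of the Proposition — is that the hard Lefschetz property of the \emph{closed} manifold $M$ (equivalently, by Proposition \ref{Yan theo}, the existence on $M$ of simultaneously $d$- and $d^\Lambda$-closed representatives for every de Rham class) forces this $L^2$-Lefschetz surjectivity on $\tilde M$, although ordinary hard Lefschetz is \emph{not} inherited by $\tilde M$. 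This transfer should be carried out via the $\Gamma$-equivariance of the entire Lefschetz/primitive formalism together with the closed-range property of $D_{d+d^\Lambda}$ (and of $\Delta_d$) on the cocompact cover, which let one solve the relevant primitive equations $\Gamma$-equivariantly on $\tilde M$ exactly when the corresponding finite-dimensional problems are solvable on $M$. Granting this, $\eta=0$, hence $\alpha_h=0$, which completes (i)$\Rightarrow$(iii) and the proof.
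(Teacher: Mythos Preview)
Your reductions (iii)$\Rightarrow$(i),(ii) and the $*_s$-symmetry are fine, and the use of $\Gamma$-Fredholm theory to remove the closures in Proposition~\ref{Hodge deco1} is correct. But your route diverges sharply from the paper's, and the step you flag as ``the crux'' is a genuine gap.

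The paper does not touch global $L^2$-Hodge theory on $\tilde M$ here at all. It works \emph{downstairs}: on the closed base $M$ the hard Lefschetz property gives the ordinary $dd^\Lambda$-Lemma, and the paper upgrades this to a quantitative estimate. For example, given $\alpha=d\beta$ with $d^\Lambda\alpha=0$ on $M$, one writes $\alpha=dd^\Lambda\gamma$ and then normalises $\gamma=d^{\Lambda*}\eta$ with $\eta\in d^\Lambda\Omega^*$; since $\Delta_{d^\Lambda}$ is elliptic and injective on that subspace one gets $\|\eta\|_{L^2_3(M)}\le c_M\|\alpha\|_{L^2(M)}$ with $c_M$ depending only on $M$. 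Because $\Pi|_F:F\to M$ is a diffeomorphism off a set of Hausdorff dimension $\le 2n-1$, the same estimate holds on every translate $F_\phi=\phi(F)$ of the fundamental domain. Given $\tilde\alpha\in L^2$ on $\tilde M$, one solves on each $F_\phi$ separately, sums the solutions $\tilde\eta_{F_\phi}$ over $\Gamma$, and the uniform constant $c_M$ makes $\tilde\eta\in L^2_3(\tilde M)$; then $\tilde\gamma=d^{\Lambda*}\tilde\eta\in L^2_2$ gives $\tilde\alpha=dd^\Lambda\tilde\gamma$. No Yan-type argument, no $L^2$-Lefschetz surjectivity, no harmonic injectivity on $\tilde M$ is needed.

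By contrast, your plan reduces (i)$\Rightarrow$(iii) to the injectivity of $L^2\mathcal H^k_{d+d^\Lambda}(\tilde M)\to L^2H^k_{dR}(\tilde M)$, and then proposes to ``transcribe Yan's proof $\Gamma$-equivariantly''. This is where it breaks. Yan's mechanism requires the Lefschetz maps $L^{n-k}$ to be \emph{surjective} on de Rham cohomology, and on $\tilde M$ this surjectivity is not available a priori --- as you yourself note, hard Lefschetz is not inherited by the cover. The sentence ``solve the relevant primitive equations $\Gamma$-equivariantly on $\tilde M$ exactly when the corresponding finite-dimensional problems are solvable on $M$'' is precisely the content of the Proposition; it is the paper's fundamental-domain elliptic estimate that effects this transfer, and without it (or an independent substitute, e.g.\ a von Neumann dimension equality $\dim_\Gamma L^2\mathcal H^k_{d+d^\Lambda}=\dim_\Gamma L^2\mathcal H^k_d$ together with one of the two inclusions, neither of which you establish) the argument is circular. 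Note also that the injectivity you isolate is exactly what the paper proves in Proposition~\ref{key prop2} \emph{as a consequence of} Proposition~\ref{key prop}.
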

   \begin{proof}
  We may assume without loss of generality that $M$ is a connected manifold.
   Denote by $\pi_1(M)$ the fundamental group of $M$.
   Let $\Gamma$ be the deck transformation group of the covering.
   Then $\Gamma$ is isomorphic to $\pi_1(M)$.
    Notice that $(\tilde{M},\tilde{g},\tilde{J},\tilde{\omega})$ is a complete, noncompact almost K\"ahler manifold with bounded geometry.
   Since $\tilde{D}_{d+d^\Lambda}=\pi^*D_{d+d^\Lambda}$, $\tilde{D}_{dd^\Lambda}=\pi^*D_{dd^\Lambda}$ which are $\Gamma\cong\pi_1(M)$-invariant
   fourth-order elliptic operators,
   $\tilde{D}_{d+d^\Lambda}$ and $\tilde{D}_{dd^\Lambda}$ are essential self-adjointness (cf.\cite{Ati,L¨¹ck,Shubin}).
   Suppose $F\subseteq\tilde{M}$ is the fundamental domain of the universal covering.
   It is well known that $\Pi(F)$ is an open set of $M$ and $\Pi(\bar{F})=M$ (cf. \cite{Cha}),
    moreover both $\partial\bar{F}$ and $M\setminus\Pi(F)$ satisfy the Hausdorff dimension less than or equal to $2n-1$ (cf. \cite{Cha}).
    For any $\phi\in\Gamma$, $\phi:\tilde{M}\rightarrow\tilde{M}$ is a homeomorphism.
    Denote by $F_{\phi}=\phi(F)$, then $\phi:F\rightarrow F_{\phi}$ is a diffeomorphism
    and $F\cap F_{\phi}=\varnothing$ for any $\phi\neq e$.
      Moreover, $\tilde{M}=\cup_{\phi\in\Gamma}\phi(\bar{F})$.

   \vskip 6pt

   Since $(M,\omega)$ satisfies the hard Lefschetz property, then the $dd^\Lambda$-Lemma holds on $M$,
   that is
   $$
   Imd\cap\ker d^\Lambda= \ker d\cap Imd^\Lambda=Imdd^\Lambda.
   $$
   Suppose that $\alpha$ is a $d$-closed $k$-form on $M$ and $\alpha=d^\Lambda\beta$.
   Then there exists $k-1$-form $\gamma$ such that $\alpha=d^\Lambda\beta=d\gamma$.
   Note that $\gamma\in\Omega^{k-1}=\mathcal{H}^{k-1}_d\oplus
     d(\Omega^{k-2})\oplus d^*(\Omega^k)$,
    without loss of generality, we can assume $\gamma=d^*\eta$, where $\eta$ is a $k$-form.
    Then $\alpha=dd^*\eta$. Using the Hodge decomposition again, we can assume $\eta=d\xi$, where $\xi\in\Omega^{k-1}$.
    Since $dd^*:d\Omega^{k-1}\rightarrow d\Omega^{k-1}$ is an elliptic linear operator and essentially self-adjoint (cf.\cite{Ati,L¨¹ck,Shubin}),
    we can obtain
    $$
    \|\eta\|_{L^2_2(M)}\leq c_M\|\alpha\|_{L^2(M)},
    $$
    where $c_M$ is constant which only depends on $M$.
    Indeed, we have gotten the following property in distribution sense.
    If $\alpha=d^\Lambda\beta$ is a $d$-closed $L^2$ form on $M$, then we can find a $L^2_1$ form $\gamma$ such that $\alpha=d\gamma$.
    Since $\Pi: F\hookrightarrow M$ is a diffeomorphism and $M\setminus\Pi(F)$ satisfy the Hausdorff dimension less than or equal to $2n-1$ (cf. \cite{Cha}),
    we obtain that:
     {\it If $\alpha_F=d^\Lambda\beta_F$ is a $d$-closed $L^2$ form on $F$, then we can find a $L^2_1$ form $\gamma_F$ such that $\alpha_F=d\gamma_F$.}
    Suppose that $\tilde{\alpha}=d^\Lambda\tilde{\beta}$ is a $d$-closed $k$-form on $\tilde{M}$, moreover $\tilde{\alpha}$ is $L^2$.
    Restricted to $F_{\phi}$,
    we can find $\tilde{\eta}_{F_{\phi}}$ such that
     $
    \|\tilde{\eta}_{F_{\phi}}\|_{L^2_2(F_{\phi})}\leq c(F_{\phi})\|\tilde{\alpha}\|_{L^2(F_{\phi})}
    $
    and
    $\tilde{\alpha}|_{F_{\phi}}=d\tilde{\gamma}_{F_{\phi}}=dd^*\tilde{\eta}_{F_{\phi}}$,
    where $\tilde{\gamma}_{F_{\phi}}\triangleq d^*\tilde{\eta}_{F_{\phi}}$.
    Define $\tilde{\eta}=\sum_{\phi\in\Gamma}\tilde{\eta}_{F_{\phi}}$
    and $\tilde{\gamma}=d^*\tilde{\eta}$.
     It is easy to see that $\tilde{\eta}\in L^2_2(\cup_{\phi\in\Gamma}F_{\phi})=L^2_2(\cup_{\phi\in\Gamma}\bar{F_{\phi}})=L^2_2(\tilde{M})$.
    Then we will get $\tilde{\alpha}=d\tilde{\gamma}$ and $\tilde{\gamma}\in L^2_1(\tilde{M})$.

   \vskip 6pt

    Suppose that $\alpha$ is a $d^\Lambda$-closed $k$-form on $M$ and $\alpha=d\beta$.
   Then by $dd^\Lambda$-Lemma there exists $k$-form $\gamma$ such that $\alpha=dd^\Lambda\gamma$.
   Note that $\Delta_{d^\Lambda}=d^\Lambda d^{\Lambda*}+d^{\Lambda*}d^\Lambda$ is an ellipitic differential operator (see \cite[Proposition 3.3]{TY}).
   Applying elliptic theory to the $\Delta_{d^\Lambda}$ then implies the Hodge decomposition
   \begin{equation}\label{d}
     \Omega^k(M)=\mathcal{H}^k_{d^\Lambda}(M)\oplus d^\Lambda\Omega^{k+1}(M)\oplus d^{\Lambda*}\Omega^{k-1}(M).
   \end{equation}
   Without loss of generality, we can assume $\gamma=d^{\Lambda*}\eta$ and $\alpha=dd^\Lambda d^{\Lambda*}\eta$.
   Using the Hodge decomposition (\ref{d}) again, we can assume $\eta=d^\Lambda\xi$.
    Since $d^\Lambda d^{\Lambda*}:d^\Lambda\Omega^k\rightarrow d^\Lambda\Omega^k$ is an elliptic linear operator,
      we can obtain
    $$
    \|\eta\|_{L^2_3(M)}\leq c_{M,1}\|d^\Lambda d^{\Lambda*}\eta\|_{L^2_1(M)},
    $$
    where $c_{M,1}$ is constant which only depends on $M$.
    Note that $\alpha=d(d^\Lambda d^{\Lambda*}\eta)$,
    we can assume $d^\Lambda d^{\Lambda*}\eta=d^*\theta$,
   since $d^\Lambda d^{\Lambda*}\eta\in\Omega^{k-1}=\mathcal{H}^{k-1}_d\oplus
     d(\Omega^{k-2})\oplus d^*(\Omega^k)$,
   where $\theta$ is a $k$-form.
   It is well known that $d+d^*$ is an elliptic linear operator (cf. \cite{Ati,Cha}),
   so $d: d^*\Omega^k\rightarrow \Omega^k$ is an elliptic linear operator.
   Hence, $\|d^\Lambda d^{\Lambda*}\eta\|_{L^2_1(M)}\leq c_{M,2}\|\alpha\|_{L^2(M)}$.
   Therefore, we can obtain
    $$
    \|\eta\|_{L^2_3(M)}\leq c_M\|\alpha\|_{L^2(M)},
    $$
    where $c_M$ is constant which only depends on $M$.
     We have gotten the following property in distribution sense.
    If $\alpha=d\beta$ is a $d^\Lambda$-closed $L^2$ form on $M$, then we can find a $L^2_2$ form $\gamma$ such that $\alpha=dd^\Lambda\gamma$.
    At last,
    we obtain that:
     {\it If $\alpha_F=d\beta_F$ is a $d^\Lambda$-closed $L^2$ form on $F$, then we can find a $L^2_2$ form $\gamma_F$ such that $\alpha_F=dd^\Lambda\gamma_F$.}
     Suppose that $\tilde{\alpha}=d\tilde{\beta}$ is a $d^\Lambda$-closed $k$-form on $\tilde{M}$, moreover $\tilde{\alpha}$ is $L^2$.
    Restricted to $F_{\phi}$,
    we can find $\tilde{\eta}_{F_{\phi}}$ such that
     $
    \|\tilde{\eta}_{F_{\phi}}\|_{L^2_3(F_{\phi})}\leq c(F_{\phi})\|\tilde{\alpha}\|_{L^2(F_{\phi})}
    $
    and
    $\tilde{\alpha}|_{F_{\phi}}=dd^\Lambda\tilde{\gamma}_{F_{\phi}}=dd^\Lambda d^{\Lambda*}\tilde{\eta}_{F_{\phi}}$,
    where $\tilde{\gamma}_{F_{\phi}}\triangleq d^{\Lambda*}\tilde{\eta}_{F_{\phi}}$.
    Define $\tilde{\eta}=\sum_{\phi\in\Gamma}\tilde{\eta}_{F_{\phi}}$
    and $\tilde{\gamma}=d^{\Lambda*}\tilde{\eta}$.
     It is easy to see that $\tilde{\eta}\in L^2_3(\cup_{\phi\in\Gamma}F_{\phi})=L^2_3(\cup_{\phi\in\Gamma}\bar{F_{\phi}})=L^2_3(\tilde{M})$,
     since $\tilde{M}\setminus\cup_{\phi\in\Gamma}F_{\phi}$ has Hausdorff dimension$\leq2n-1$.
    Then we will get $\tilde{\alpha}=dd^\Lambda\tilde{\gamma}$ and $\tilde{\gamma}\in L^2_2(\tilde{M})$.
   \end{proof}

   \begin{rem}
   Indeed, in the above Proposition, we have proven that

    $\alpha=d^\Lambda\beta$, $\alpha\in L^2$ and $\alpha$ is $d$-closed $\Rightarrow$  $\alpha=d\gamma$, $\gamma\in L^2_1$;

    $\alpha=d\beta$, $\alpha\in L^2$ and $\alpha$ is $d^\Lambda$-closed $\Rightarrow$   $\alpha=dd^\Lambda\gamma$, $\gamma\in L^2_2$.

   \end{rem}

   \begin{prop}\label{key prop2}
    Let $(M,g,J,\omega)$ be a $2n$-dimensional closed almost K\"ahler manifold,
    $\Pi:(\tilde{M},\tilde{g},\tilde{J},\tilde{\omega})\rightarrow (M,g,J,\omega)$ the universal covering map.
    If $(M,g,J,\omega)$ satisfies the hard Lefschetz property,
   then the canonical homomorphism
   $$L^2H^k_{d+d^{\Lambda}}(\tilde{M};\mathbb{R})\rightarrow L^2H^k_{dR}(\tilde{M};\mathbb{R})$$
   is an isomorphism for all $k$.
   \end{prop}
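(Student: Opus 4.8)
The plan is to prove the isomorphism by introducing, alongside the canonical map $j_k\colon L^2H^k_{d+d^{\Lambda}}(\tilde M)\to L^2H^k_{dR}(\tilde M)$, the companion map $i_k\colon L^2H^k_{dR}(\tilde M)\to L^2H^k_{dd^{\Lambda}}(\tilde M)$, $[\beta]_{dR}\mapsto[\beta]_{dd^{\Lambda}}$, and then feeding surjectivity of one into injectivity of the other through a Poincar\'e-type duality on harmonic forms. Both maps are well defined because a $d$-closed form is $dd^{\Lambda}$-closed, a $dd^{\Lambda}$-exact form is $d$-exact, and the relevant $L^2$-closures nest. By Gromov's $L^2$ Hodge decomposition $(\ref{Hodge deco})$ together with Proposition \ref{Hodge deco1}, all three groups are represented by $L^2$ harmonic forms, on which $j_k$ and $i_k$ become the orthogonal projections $L^2\mathcal{H}^k_{d+d^{\Lambda}}\to L^2\mathcal{H}^k_d$ and $L^2\mathcal{H}^k_d\to L^2\mathcal{H}^k_{dd^{\Lambda}}$. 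The ingredients I will use are: the $L^2$-$dd^{\Lambda}$-Lemma on $\tilde M$ (Proposition \ref{key prop} and the Remark following it --- this is the only place the hard Lefschetz hypothesis on $M$ is consumed); the Brylinski--Koszul identity $d^{\Lambda}=\Lambda d-d\Lambda$, valid on any symplectic manifold; the intertwining $*_g\Delta_{d+d^{\Lambda}}=\Delta_{dd^{\Lambda}}*_g$; Gromov's $L^2$ Stokes lemma (Lemma \ref{Gromov lemma}), formula $(\ref{formula})$ and the adjunctions of Lemma \ref{our lemma}; and the fact that, the universal cover $\tilde M$ having bounded geometry (it covers the closed manifold $M$), every $L^2$ harmonic form for $\Delta_d$, $D_{d+d^{\Lambda}}$ or $D_{dd^{\Lambda}}$ lies in $L^2_l\Omega^\ast(\tilde M)$ for every $l$.

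First I would show that $j_k$ and $i_k$ are surjective for every $k$, by the standard $dd^{\Lambda}$-trick. For $j_k$: take the $\Delta_d$-harmonic representative $\alpha$ of a class; since $d\alpha=0$, the Koszul identity gives $d^{\Lambda}\alpha=-d(\Lambda\alpha)$, a $d$-exact, $d^{\Lambda}$-closed $L^2$ form, so the $L^2$-$dd^{\Lambda}$-Lemma produces $\theta\in L^2_2\Omega^{k-1}(\tilde M)$ with $d^{\Lambda}\alpha=dd^{\Lambda}\theta$; then, using $d^{\Lambda}d=-dd^{\Lambda}$, the form $\alpha+d\theta$ lies in $\ker(d+d^{\Lambda})\cap L^2\Omega^k$ and is $L^2$-cohomologous to $\alpha$, so $j_k([\alpha+d\theta])=[\alpha]$. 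For $i_k$: take the $dd^{\Lambda}$-harmonic representative $\sigma$; from $dd^{\Lambda}\sigma=0$ we get $d^{\Lambda}(d\sigma)=-dd^{\Lambda}\sigma=0$, so $d\sigma$ is again a $d$-exact, $d^{\Lambda}$-closed $L^2$ form, and the $L^2$-$dd^{\Lambda}$-Lemma gives $\gamma\in L^2_2\Omega^{k+1}(\tilde M)$ with $d\sigma=dd^{\Lambda}\gamma$; then $\sigma-d^{\Lambda}\gamma$ is $d$-closed, $L^2$, and $L^2$-cohomologous to $\sigma$ in $L^2H^k_{dd^{\Lambda}}$, so $i_k([\sigma-d^{\Lambda}\gamma])=[\sigma]$.

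Next I would derive injectivity of $j_k$ from surjectivity of $i_{2n-k}$ via the wedge pairings
\[
L^2\mathcal{H}^k_{d+d^{\Lambda}}(\tilde M)\times L^2\mathcal{H}^{2n-k}_{dd^{\Lambda}}(\tilde M)\to\mathbb{R},\qquad L^2\mathcal{H}^k_d(\tilde M)\times L^2\mathcal{H}^{2n-k}_d(\tilde M)\to\mathbb{R},\qquad (\mu,\sigma)\mapsto\textstyle\int_{\tilde M}\mu\wedge\sigma .
\]
Because $*_g$ carries $L^2\mathcal{H}^k_{d+d^{\Lambda}}$ isomorphically onto $L^2\mathcal{H}^{2n-k}_{dd^{\Lambda}}$ (and $L^2\mathcal{H}^k_d$ onto $L^2\mathcal{H}^{2n-k}_d$) by the intertwining, and $\int_{\tilde M}\mu\wedge *_g\mu=\|\mu\|^2_{L^2}$, each pairing separates its first factor. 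One then checks the compatibility $\int_{\tilde M}(j_k\mu)\wedge\eta=\int_{\tilde M}\mu\wedge(i_{2n-k}\eta)$ for $\mu\in L^2\mathcal{H}^k_{d+d^{\Lambda}}$ and $\eta\in L^2\mathcal{H}^{2n-k}_d$: writing $\mu=j_k\mu+\mu'$ and $\eta=i_{2n-k}\eta+\eta'$ with $\mu'\in\overline{dL^2\Omega^{k-1}}$ and $\eta'\in\overline{dL^2\Omega^{2n-k-1}}+\overline{d^{\Lambda}L^2\Omega^{2n-k+1}}$, the cross terms $\int\mu'\wedge\eta$ and $\int\mu\wedge\eta'$ vanish, by approximating the closures by genuine (co)exact forms and using $d\eta=d\mu=d^{\Lambda}\mu=0$ together with Lemma \ref{Gromov lemma}, formula $(\ref{formula})$ and Lemma \ref{our lemma}. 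Finally, if $j_k\mu=0$ then $\int_{\tilde M}\mu\wedge(i_{2n-k}\eta)=0$ for all $\eta$; since $i_{2n-k}$ is onto, $\int_{\tilde M}\mu\wedge\sigma=0$ for every $\sigma\in L^2\mathcal{H}^{2n-k}_{dd^{\Lambda}}$, and choosing $\sigma=*_g\mu$ gives $\|\mu\|^2_{L^2}=0$, whence $\mu=0$. Thus $j_k$ is injective, and combined with its surjectivity it is an isomorphism for every $k$.

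The step I expect to be the main obstacle is the compatibility identity of the third paragraph: one must show that the parts of $\mu$ and $\eta$ which merely lie in the $L^2$-closures $\overline{dL^2\Omega^\ast}$, $\overline{d^{\Lambda}L^2\Omega^\ast}$ pair trivially with the closed harmonic forms --- this is precisely where the noncompactness of $\tilde M$ enters, and it is what forces the use of Gromov's $L^2$ Stokes lemma and the bounded geometry of $\tilde M$ (so that the harmonic forms have $L^2$ covariant derivatives of all orders and the integrations by parts against the approximating sequences are legitimate). Everything else is a faithful $L^2$-transcription onto $\tilde M$ of the compact-manifold argument that the $dd^{\Lambda}$-Lemma makes the symplectic Bott--Chern-type cohomology coincide with de Rham cohomology.
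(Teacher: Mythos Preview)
Your surjectivity argument for $j_k$ is essentially the paper's: take the $\Delta_d$-harmonic representative $\alpha$, observe $d^{\Lambda}\alpha$ is $d$-exact and $d^{\Lambda}$-closed, and invoke the $L^2$-$dd^{\Lambda}$-Lemma (Proposition~\ref{key prop}) to produce $\theta$ with $d^{\Lambda}\alpha=dd^{\Lambda}\theta$, so that $\alpha+d\theta$ represents a preimage. (The paper phrases the $d$-exactness of $d^{\Lambda}\alpha$ as ``$dd^{\Lambda}\alpha=0$'' rather than through the Koszul identity, but this is cosmetic.)

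Where you diverge is injectivity. The paper dispatches it in one line: if $\alpha\in\ker(d+d^{\Lambda})\cap L^2$ satisfies $\alpha=d\beta$, then the second implication in the Remark after Proposition~\ref{key prop} gives $\alpha=dd^{\Lambda}\gamma$ with $\gamma\in L^2_2$, so $[\alpha]_{d+d^{\Lambda}}=0$. You instead introduce the companion map $i_k$, prove \emph{its} surjectivity with the same lemma, and then feed that into injectivity of $j_k$ via a Poincar\'e-type wedge pairing and the intertwining $*_g\colon L^2\mathcal{H}^k_{d+d^{\Lambda}}\cong L^2\mathcal{H}^{2n-k}_{dd^{\Lambda}}$. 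Your compatibility identity does hold --- the cross terms vanish exactly as you outline, the $d^{\Lambda}$-piece being handled by rewriting $\mu\wedge d^{\Lambda}\psi=d\bigl((*_s\psi)\wedge *_s\mu\bigr)$ via formula~(\ref{formula}) and $d(*_s\mu)=0$, then applying Lemma~\ref{Gromov lemma} --- so the argument is correct. But it is strictly more work than the paper's route: you consume the very same implication of the $L^2$-$dd^{\Lambda}$-Lemma (for the surjectivity of $i_{2n-k}$) and then add the duality machinery on top, whereas the paper applies that implication directly to the injectivity question. What your approach buys is a symmetric treatment of $j_k$ and $i_k$ and an explicit duality statement; what the paper's approach buys is brevity.
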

  \begin{proof}
  Notice that since $D_{d+d^{\Lambda}}$ is a $\Gamma=\pi_1(M)$-invariant elliptic operator on $\tilde{M}$,
  then $D_{d+d^{\Lambda}}$ is of essential self-adjointness (cf. \cite[Lecture 18]{Shubin}).
  Hence, for any $\alpha\in L^2\Omega^k$, $D_{d+d^{\Lambda}}\alpha=0$ in the sense of distribution implies that $\alpha\in L^2_4\Omega^k$.
   Suppose that $\alpha$ is a $d$-closed and $d^{\Lambda}$-closed $L^2$ $k$-form such that $[\alpha]_{dR}=0$, i.e. $\alpha=d\beta$ for some $\beta\in L^2\Omega^{k-1}(\tilde{M})$.
  By the proof of Proposition \ref{key prop},
   we can find $\gamma\in L^2_2(\tilde{M})$ such that
 $\alpha=dd^{\Lambda}\gamma$.
  It follows that $[\alpha]_{d+d^{\Lambda}}=0$. This proves that the homomorphism is injective.

  For any $[\alpha]_{dR}\in L^2H^k_{dR}(\tilde{M};\mathbb{R})$,
  by the decomposition \ref{Hodge deco},
  we can assume $\alpha\in L^2\mathcal{H}^k_d$ without loss of generality.
 If $d^\Lambda\alpha= 0$,
 then $[\alpha]_{d+d^{\Lambda}}\in L^2H^k_{d+d^{\Lambda}}(\tilde{M})$ whose image under this map is $[\alpha]_{dR}$.
 Suppose that $d^\Lambda\alpha\neq0$.
 Note that $\Delta_d\alpha=0$ and $\alpha\in L^2$, therefore we can obtain that $\alpha$ is smooth and
 $\|\alpha\|_{L^2_k(\tilde{M})}\leq c(k)$ for any $k=0,1,2,\cdot\cdot\cdot$.
  Since $dd^\Lambda\alpha=0$, by the proof of Proposition \ref{key prop},
   we can find $\gamma\in L^2_2(\tilde{M})$ such that
   $d^\Lambda\alpha=dd^\Lambda\gamma$.
   Hence, $d^\Lambda(\alpha+d\gamma)=0$ and $d(\alpha+d\gamma)=0$.
   It follows that $[\alpha+d\gamma]_{d+d^{\Lambda}}\in L^2H^k_{d+d^{\Lambda}}(\tilde{M})$ whose image under this map is $[\alpha]_{dR}$.
   This proves that the homomorphism is also surjective. So it is an isomorphism.
   \end{proof}

   Let $(M,\omega)$ be a closed symplectic manifold of dimension $2n$.
   Then one can find an almost K\"ahler structure $(g,J,\omega)$ on $(M,\omega)$. $(M,g,J,\omega)$ is a closed almost K\"ahler manifold of dimension $2n$ (cf. \cite{MS}).
   Let $\Pi: (\tilde{M},\tilde{g},\tilde{J},\tilde{\omega})\rightarrow (M,g,J,\omega)$ be the universal covering map. If $(M,g,J,\omega)$ satisfies the hard Lefschetz property, then from Proposition \ref{key prop2}, by M. Atiyah's result (\cite{Ati}), it is easy to get Theorem \ref{main 3}.

  \vskip 6pt

  {\bf Proof of Theorem \ref{main 2}.}  Let $(M,g,J,\omega)$ be a $2n$-dimensional closed almost K\"ahler parabolic
   manifold which satisfies the hard Lefschetz property,
   $$\Pi:(\tilde{M},\tilde{g},\tilde{J},\tilde{\omega})\rightarrow (M,g,J,\omega)$$ the universal covering map.
    Therefore, by Proposition \ref{key prop2},
    $$L^2H^k_{d+d^{\Lambda}}(\tilde{M};\mathbb{R})\rightarrow L^2H^k_{dR}(\tilde{M};\mathbb{R})$$
   is an isomorphism for all $k$.
   Since $L^2H^k_{d+d^{\Lambda}}(\tilde{M};\mathbb{R})\cong L^2\mathcal{H}^k_{d+d^\Lambda}(\tilde{M};\mathbb{R})$,
   by Theorem \ref{main 1}, we know $L^2H^k_{d+d^{\Lambda}}(\tilde{M};\mathbb{R})=0$ for $k\neq n$.
    Hence, $L^2H^k_{dR}(\tilde{M};\mathbb{R})=0$ for $k\neq n$.
    The Atiyah index theorem for covers \cite{Ati} then gives $(-1)^n\chi(M)\geq0$.
    Then, the conclusion follows. \qed

  \vskip 6pt

  \noindent{\bf Acknowledgements.}\,
  The first author would like to thank Professor Xiaojun Huang for his support.
   The second author would like to thank East China Normal University and Professor Qing Zhou
   for hosting his visit in the spring semester in 2014.
   The second author also would like to thank Wuhan University and Professor Hua Chen for hosting his visit in the autumn semester in 2015.
   The authors dedicate this paper to the memory of Professor Weiyue Ding in deep appreciation for his long-term support of their work.
   At last, The authors are grateful to the referees for their valuable comments and suggestions.

  \vskip 24pt

  \noindent Qiang Tan\\
 Faculty of Science, Jiangsu University, Zhenjiang, Jiangsu 212013, China\\
 e-mail: tanqiang@ujs.edu.cn\\

  \vskip 6pt

  \noindent Hongyu Wang\\
  School of Mathematical Sciences, Yangzhou University, Yangzhou, Jiangsu 225002, China\\
  e-mail: hywang@yzu.edu.cn\\

   \vskip 6pt

  \noindent Jiuru Zhou\\
  School of Mathematical Sciences, Yangzhou University, Yangzhou, Jiangsu 225002, China\\
  e-mail: zhoujr1982@hotmail.com

 \end{document}